\DeclareMathSymbol{\R}{\mathbin}{AMSb}{"52}
\newcommand{\A}{\mathcal{A}}
\newcommand{\E}{\mathbb{E}}
\newcommand{\F}{\mathscr{F}}
\renewcommand{\H}{\mathscr{H}}
\renewcommand{\R}{\mathbb{R}}
\newcommand{\inv}{\text{inv}}
\newcommand{\cross}{\text{cross}}
\newcommand{\nest}{\text{nest}}
\newcommand{\Cross}{\text{Cross}}
\newcommand{\Nest}{\text{Nest}}
\DeclareMathSymbol{\N}{\mathbin}{AMSb}{"4E}
\DeclareMathSymbol{\Z}{\mathbin}{AMSb}{"5A}
\newtheorem{thm}{Theorem}
\newtheorem*{thm*}{Theorem}
\newtheorem{lem}{Lemma}
\newtheorem{remark}{Remark}
\newtheorem{defn}{Definition}
\newtheorem{cor}{Corollary}
\newtheorem{assump}{Condition}
\begin{document}
\title{Two-parameter Non-commutative Central Limit Theorem}
\author{Natasha Blitvi\'c\\}

\begin{abstract}
The\thanks{\footnotesize This research was supported by the \emph{Claude E. Shannon Research Assistantship} at MIT and the \emph{Chateaubriand Fellowship}, held at the Institut Gaspard Monge of the Univerist\'e Paris-Est.\\} non-commutative Central Limit Theorem (CLT) introduced by Speicher in 1992 states that given a sequence $(b_i)_{i\geq 1}$ of non-commutative random variables satisfying the commutation relation
$$b_i^{\epsilon}b_j^{\epsilon'}=s(j,i)\,b_j^{\epsilon'}b_i^{\epsilon},\quad\quad s(j,i)\in\{-1,1\},$$
the $\ast$-moments of the normalized partial sum $S_N=(b_1+\ldots + b_N)/\sqrt N$ are given by a Wick-type formula refined to count the number of crossings in the underlying pair-partitions. When coupled with explicit matrix models, the theorem yields random matrix models for creation and annihilation operators on the $q$-Fock space of Bo\.zejko and Speicher.

In this paper, we derive a non-commutative CLT when the commutation relation is generalized to $$b_i^{\epsilon}b_j^{\epsilon'}=\mu_{\epsilon',\epsilon}(j,i)\,b_j^{\epsilon'}b_i^{\epsilon}, \quad \quad \mu_{\epsilon',\epsilon}(j,i)\in\mathbb R.$$ 
The statistics of the limiting random variable are a second-parameter refinement of those above, jointly indexing the number of crossings \emph{and nestings} in the underlying pair-partitions. Coupled with analogous matrix constructions, the theorem yields random matrix models for creation and annihilation operators on the recently introduced $(q,t)$-Fock space.
\end{abstract}
\maketitle

\section{Introduction}
\label{main}
In \emph{non-commutative probability}, probabilistic interpretations of operator algebraic frameworks give rise to non-commutative analogues of classical results in probability theory. The general setting is that of a non-commutative probability space $(\mathcal A,\varphi)$, formed by a $\ast$-algebra $\mathcal A$, containing the \emph{non-commutative random variables}, and a positive linear functional $\varphi:\mathcal A\to \mathbb C$, playing the role of \emph{expectation}. A particularly rich non-commutative probabilistic framework is Voiculescu's free probability \cite{Voiculescu1986}, which has been found to both parallel and complement the classical theory (see in-depth treatments in \cite{Biane2003,NicaSpeicher,Voiculescu1992}). 
Whereas free probability can be seen as characterized by the absence of commutative structure, a parallel -- albeit somewhat slower -- development has targeted non-commutative settings built around certain types of \emph{commutation relations}. 

In \cite{Speicher1992}, Speicher showed a non-commutative version of the classical Central Limit Theorem (CLT) for mixtures of commuting and anti-commuting elements.
Speicher's CLT concerns a sequence of elements $b_1,b_2\ldots\in\A$ whose terms pair-wise satisfy the deformed commutation relation $b_ib_j=s(j,i)b_jb_i$ with $s(j,i)\in\{-1,1\}$. It is not a priori clear that the partial sums \begin{equation}S_N:=\frac{b_1+\ldots+b_N}{\sqrt N}\label{eq-SN}\end{equation} should converge in some reasonable sense, nor that the limit should turn out to be a natural refinement of the Wick formula for classical Gaussians, but that indeed turns out to be the case. The following theorem is the ``almost sure" version of the Central Limit Theorem of Speicher, presented as an amalgamation of Theorem~1 of \cite{Speicher1992} and Lemma~1 of \cite{Speicher1992}. Throughout this paper, $\mathscr P_2(2n)$ will denote the collection of pair-partitions of $[2n]$, with each $\mathscr V\in\mathscr P_2(2n)$ uniquely written as $\mathscr V=\{(w_1,z_1),\ldots,(w_n,z_n)\}$ for $w_1<\ldots<w_n$ and $w_i<z_i$ $(i=1,\ldots,n)$. For further prerequisite definitions, the reader is referred to Section~\ref{preliminaries}.

\begin{assump}
Given a $\ast$-algebra $\mathscr A$ and a state $\varphi:\mathscr A\to\mathbb C$, consider a sequence $\{b_i\}_{i\in\mathbb N}$ of elements of $\mathscr A$ satisfying the following:
\begin{enumerate}
\item for all $i\in\mathbb N$, $\varphi(b_i)=\varphi(b_i^\ast)=0$;
\item for all for all $i,j\in\mathbb N$ with $i<j$ and $\epsilon,\epsilon'\in\{1,\ast\}$, $\varphi(b_i^{\epsilon}b_i^{\epsilon'}) = \varphi(b_j^{\epsilon}b_j^{\epsilon'})$; 
\item for all $n\in\mathbb N$ and all $j(1),\ldots,j(n)\in\mathbb N$, $\epsilon(1),\ldots,\epsilon(n)\in\{1,\ast\}$, the corresponding mixed moment is uniformly bounded, viz. $|\varphi(\prod_{i=1}^nb_{j(i)}^{\epsilon(i)})|\leq \alpha_n$ for some non-negative real $\alpha_n$;
\item $\varphi$ factors over the naturally ordered products in $\{b_i\}_{i\in\mathbb N}$, in the sense of Definition~\ref{natordprod}.
\end{enumerate}
Assume that for all $i\neq j$ and all $\epsilon,\epsilon'\in\{1,\ast\}$, $b_i^{\epsilon}$ and $b_j^{\epsilon'}$ satisfy the commutation relation
\begin{equation}b_i^{\epsilon}b_j^{\epsilon'}\,\,=\,\,s(j,i)\,b_j^{\epsilon'}b_i^{\epsilon},\quad\quad\quad s(j,i)\in \{-1,1\}.\label{comrelassumpSp}\end{equation}\label{assump-CLT-Sp}
\end{assump}

\begin{thm}[Non-commutative CLT \cite{Speicher1992}]
Consider a non-commutative probability space $(\A,\varphi)$ and a sequence of elements $\{b_i\}_{i\in\mathbb N}$ in $\A$ satisfying Condition~\ref{assump-CLT-Sp}. Fixing $q\in [-1,1]$, let the commutation signs $\{s(i,j)\}_{1\leq i< j}$ be drawn from the collection of independent, identically distributed random variables taking values in $\{-1,1\}$ with $\E(\mathbf{s}(i,j))=q$. Then, for almost every sign sequence  $\{s(i,j)\}_{1\leq i< j}$, the following holds: for every $n\in\mathbb N$ and all $\epsilon(1),\ldots,\epsilon(2n)\in \{1,\ast\}$,
\begin{align}&\lim_{N\to\infty}\varphi(S_N^{\epsilon(1)}\ldots S_N^{\epsilon(2n-1)})=0,&\label{eqLimit1S}\\
&\lim_{N\to\infty}\varphi(S_N^{\epsilon(1)}\ldots S_N^{\epsilon(2n)})=\sum_{\mathscr V\in \mathscr P_2(2n)}q^{\cross(\mathscr V)}\prod_{i=1}^n\varphi(b^{\epsilon(w_i)}b^{\epsilon(z_i)})\,,&\label{eqLimit2S}\end{align}
with $S_N\in\A$ as given in \eqref{eq-SN}, $\mathscr V=\{(w_1,z_1),\ldots,(w_n,z_n)\}$, and where $\cross(\mathscr V)$ denotes the number of crossings in $\mathscr V$ (cf. Definition~\ref{defCrossNest}).\label{thm-CLT-Sp}
\end{thm}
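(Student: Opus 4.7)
The plan is to expand the normalized moment combinatorially, reduce the leading-order sum to pair partitions using centering and factorization, track the sign picked up from the commutation relations, and finally upgrade the expectation-level convergence to an almost-sure statement via a second-moment estimate.

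First, expand
\[ \varphi(S_N^{\epsilon(1)}\cdots S_N^{\epsilon(k)})=N^{-k/2}\sum_{j:[k]\to[N]}\varphi\bigl(b_{j(1)}^{\epsilon(1)}\cdots b_{j(k)}^{\epsilon(k)}\bigr), \]
and group the multi-indices $j$ by the set partition $\pi(j)$ of $[k]$ they induce (with $p\sim q$ iff $j(p)=j(q)$). Conditions~(1) and~(4) force any $\pi$ with a singleton block to contribute zero, Condition~(3) provides a uniform bound per term, and the number of $j$ with $\pi(j)=\pi$ is $\sim N^{|\pi|}$, so only partitions with $|\pi|=k/2$ survive in the limit. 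This immediately kills the odd moments (\ref{eqLimit1S}) and restricts the even case $k=2n$ to a sum over pair partitions $\mathscr{V}\in\mathscr{P}_2(2n)$.

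For a fixed $\mathscr{V}=\{(w_i,z_i)\}_{i=1}^n$ and an injection $\sigma$ from its blocks to $[N]$ (giving $j_\sigma(w_i)=j_\sigma(z_i)=\sigma_i$), apply (\ref{comrelassumpSp}) to reorder the product by ascending index; Conditions~(2) and~(4) then collapse the result to $\prod_i\varphi(b^{\epsilon(w_i)}b^{\epsilon(z_i)})$ multiplied by the sign $\mathrm{sgn}(\mathscr{V},\sigma)$ picked up by the reordering. The combinatorial claim is the identity
\[ \mathrm{sgn}(\mathscr{V},\sigma)=\prod_{\{(w_i,z_i),(w_j,z_j)\}\in\Cross(\mathscr{V})}s_{\{\sigma_i,\sigma_j\}}, \]
which holds because $s(\cdot,\cdot)^2=1$ reduces the parity count to the following dichotomy: crossing pairs of blocks contribute an odd number of transpositions while nested or disjoint pairs contribute an even number. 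This identity is proved by induction on $n$, peeling off the block containing the leftmost position.

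Taking expectation over the random signs, independence and $\E[\mathbf{s}(i,j)]=q$ yield $\E[\mathrm{sgn}(\mathscr{V},\sigma)]=q^{\cross(\mathscr{V})}$, which reproduces the right-hand side of (\ref{eqLimit2S}). To upgrade to almost-sure convergence, compute the variance of $N^{-n}\sum_\sigma\mathrm{sgn}(\mathscr{V},\sigma)$: pairs $(\sigma,\sigma')$ whose ranges are disjoint contribute zero by independence, so only the $O(N^{2n-1})$ pairs with overlapping ranges contribute, giving variance $O(N^{-1})$. Borel--Cantelli along a power subsequence $N_m=m^2$, together with a routine monotone interpolation for intermediate $N$, yields the a.s.\ statement for each fixed tuple $(\epsilon(1),\dots,\epsilon(2n))$; intersecting the resulting full-measure events over countably many tuples preserves the a.s.\ claim. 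The main obstacle is the sign identity above; everything else reduces to bookkeeping and a standard asymptotic variance estimate.
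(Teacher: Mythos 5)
Your proposal is correct and follows essentially the same route as the paper's argument (which the paper carries out for the extended, real-coefficient version in Theorem~\ref{CLT} and Lemma~\ref{lemRandom}, Speicher's case being the $\pm1$ specialization): expansion over set partitions, elimination of singleton-containing partitions via centering and factorization, the counting bound that leaves only pair partitions, the sign bookkeeping in which nested (and disjoint) blocks contribute an even number of identical transposition signs and hence $s^2=1$, while crossings contribute one sign each, then $\E[\mathrm{sgn}(\mathscr V,\sigma)]=q^{\cross(\mathscr V)}$ and a second-moment Borel--Cantelli argument. The one place you deviate is the almost-sure upgrade: you discard only the pairs $(\sigma,\sigma')$ with disjoint ranges, obtaining a variance bound $O(N^{-1})$, which is not summable and forces the subsequence $N_m=m^2$ plus interpolation; note that ``monotone interpolation'' is not literally available here since the summands $\mathrm{sgn}(\mathscr V,\sigma)\in\{-1,1\}$ make $X_N$ non-monotone, though the step can be repaired crudely because each summand is bounded by $1$, so the increments between $N_m$ and $N_{m+1}$ are $o(1)$. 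The paper (following Speicher) instead observes that any two index tuples sharing \emph{at most one} common value already have zero covariance, since a common sign variable $s(a,b)$ requires two shared indices; this leaves only $\Theta(N^{2n-2})$ contributing pairs, gives the summable bound $\E|X_N-\E X_N|^2\le C/N^2$, and so yields the a.s.\ statement directly by Markov plus a union bound, with no subsequence or interpolation needed. Either way the conclusion holds, so your argument is sound once the interpolation step is justified by boundedness rather than monotonicity.
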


The moment expressions in (\ref{eqLimit1S}) and (\ref{eqLimit2S}) may seem familiar. Indeed, this stochastic setting with the average value of the commutation coefficient set to $q$ turns out to be, from the point of view of limiting distributions, equivalent to the setting of bounded linear operators on the the \emph{$q$-Fock space} $\F_q(\H)$ of Bo\.zejko and Speicher \cite{Bozejko1991}. Specifically, given a real, separable Hilbert space $\H$ and two elements $f,g\in\H$, the \emph{creation} and \emph{annihilation} operators on $\F_q(\H)$, $a_q(f)^\ast$ and $a_q(g)$ respectively, satisfy the \emph{$q$-commutation relation}:
\begin{equation} a_q(f)a_q(g)^\ast-q\,a_q(g)^\ast a_q(f)=\langle f,g\rangle_\H\, 1.\label{qcomm}\end{equation}
The mixed moments with respect to the \emph{vacuum expectation state} $\varphi_q$ of these operators are given by a Wick-type formula which, compared against (\ref{eqLimit1S}) and (\ref{eqLimit2S}), yields that for a unit vector $e$ in $\H$, 
$\lim_{N\to\infty}\varphi(S_{N}^{\epsilon(1)}\ldots S_{N}^{\epsilon(n)})=\varphi_q(a_q(e)^{\epsilon(1)}\ldots a_q(e)^{\epsilon(n)})$ for all $n\in\mathbb N$ and  $\epsilon(1),\ldots,\epsilon(2n)\in\{1,\ast\}$. As described in \cite{Speicher1992}, Theorem~\ref{thm-CLT-Sp} can be used to provide a general asymptotic model for operators realizing the relation \eqref{qcomm}, thus providing non-constructive means of settling the conjecture in \cite{Frisch1970}.

Finally, any sequence $\{b_i\}_{i\in [n]}$ satisfying Condition~\ref{assump-CLT-Sp} has a $\ast$-representation on $\mathscr A_n:=\mathcal M_2(\mathbb R)^{\otimes n}$, where $\mathcal M_2(\mathbb R)$ denotes the algebra of $2\times 2$ real matrices. Matricial models for operators satisfying the canonical anti-commutation relation, i.e. the fermionic case corresponding to $q=-1$ in \eqref{qcomm}, are well known and are provided by the so-called Jordan-Wigner transform (see e.g. \cite{Carlen1993} for its appearance in a closely-related context). By extending the transform to the setting where there are both commuting and anti-commuting elements and by applying Theorem~\ref{thm-CLT-Sp} , Biane \cite{Biane1997b} obtained a random matrix model for operators satisfying the $q$-commutation relation \eqref{qcomm}. By replacing $2\times 2$ matrices with $4\times 4$ block-diagonal matrices, Kemp \cite{Kemp2005} similarly obtained models for the corresponding complex family $(a(f)+ia(g))/\sqrt{2}$. To describe the extended Jordan-Wigner model, we make the identification $\mathscr A_n\cong \mathcal M_{2^n}(\mathbb R)$ and let the $\ast$ operation be the conjugate transpose on $\mathcal M_{2^n}(\mathbb R)$. Furthermore, let $\varphi_n:\mathcal M_{2^n}(\mathbb R)\to\mathbb C$ be the positive map $a\mapsto \langle a e_0, e_0\rangle_n$, where $\langle,\rangle_n$ is the usual inner product on $\R^n$ and $e_0=(1,0,\ldots,0)$ an element of the standard basis. 

\begin{lem}[Extended Jordan-Wigner Transform \cite{Biane1997b}]
Fix $q\in[-1,1]$ and consider a sequence of commutation coefficients $\{s(i,j)\}_{i\leq j}$ 
drawn from $\{-1,1\}$. Consider the $2\times 2$ matrices $\{\sigma_x\}_{x\in\R}, \gamma$ given as
$$\sigma_x=\left[\begin{array}{cc}1&0\\0& x\end{array}\right],\quad \quad\gamma=\left[\begin{array}{cc}0&1\\0&0\end{array}\right]$$ and, for $i=1,\ldots,n$, let the element $b_{n,i}\in\mathcal M_2(\mathbb C)^{\otimes n}$ be given by
\begin{equation}b_{n,i}=\sigma_{s(1,i)}\otimes\sigma_{s(2,i)}\otimes\ldots\otimes \sigma_{s(i-1,i)}\otimes \gamma\otimes \underbrace{\sigma_{1}\otimes \ldots\otimes\sigma_{1}}_{=\sigma_1^{\otimes (n-i)}}.\label{eq-JWB}\end{equation} 
Then, for every $n\in\mathbb N$, the non-commutative probablity space $(\A_n,\varphi_n)$ and the elements $b_{n,1},b_{n,2},$ $\ldots, b_{n,n}\in\A_n$ satisfy Condition~\ref{assump-CLT-Sp}.\label{lem-JW-Bia}
\end{lem}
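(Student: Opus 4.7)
The plan is to verify both the commutation relation~\eqref{comrelassumpSp} and the four items of Condition~\ref{assump-CLT-Sp} by propagating a handful of $2\times 2$ identities through the tensor product. The key algebraic input is the following direct computation: for $x\in\{-1,1\}$ and $\epsilon\in\{1,\ast\}$,
\[
\sigma_x\gamma^\epsilon \,=\, x\,\gamma^\epsilon\sigma_x,
\]
together with the trivialities that any two $\sigma$-matrices commute (being diagonal) and that $\sigma_1$ is the identity. These are the only algebraic ingredients needed.

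To check~\eqref{comrelassumpSp} I would fix $i<j$ and compare $b_{n,i}^{\epsilon}b_{n,j}^{\epsilon'}$ to $b_{n,j}^{\epsilon'}b_{n,i}^{\epsilon}$ tensor-slot by tensor-slot. At every slot $p\neq i$ the two factors commute: both are diagonal when $p<i$, and at least one of them equals $\sigma_1$ when $p>i$. At slot $i$ the pair is $\gamma^\epsilon\sigma_{s(i,j)}$ versus $\sigma_{s(i,j)}\gamma^\epsilon$, introducing exactly the scalar $s(i,j)=s(j,i)$, as required. The same tensor-factorization disposes of items (1)--(3): on an elementary tensor $A_1\otimes\cdots\otimes A_n$ the state acts as $\varphi_n(A_1\otimes\cdots\otimes A_n)=\prod_k\langle A_k e_0,e_0\rangle$ with $e_0=(1,0)^\top\in\R^2$, so $\varphi_n(b_{n,i})=\varphi_n(b_{n,i}^\ast)=0$ because $\gamma e_0=0$ and $\gamma^\ast e_0\perp e_0$; similarly, $\varphi_n(b_{n,i}^{\epsilon}b_{n,i}^{\epsilon'})=\langle\gamma^\epsilon\gamma^{\epsilon'}e_0,e_0\rangle$ depends only on $(\epsilon,\epsilon')$ and not on $i$; and each $b_{n,i}$ has operator norm at most $1$ as a tensor product of norm-one matrices, yielding the required boundedness.

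The main effort goes into item (4). For a naturally ordered product $b_{n,i_1}^{\epsilon(1)}\cdots b_{n,i_k}^{\epsilon(k)}$ with $i_1\leq\cdots\leq i_k$, examine the matrix appearing at slot $p$: the terms with $i_l<p$ contribute $\sigma_1$, those with $i_l=p$ contribute $\gamma^{\epsilon(l)}$ and, crucially, form a contiguous block by the natural ordering, while those with $i_l>p$ contribute $\sigma_{s(p,i_l)}$. Thus the slot-$p$ matrix has the shape $\gamma^{\epsilon(l_1)}\cdots\gamma^{\epsilon(l_r)}\,\sigma_{\pm 1}$, and since $\sigma_{\pm 1}$ fixes $e_0$, its contribution to $\varphi_n$ is $\langle\gamma^{\epsilon(l_1)}\cdots\gamma^{\epsilon(l_r)}e_0,e_0\rangle$ when $p\in\{i_1,\ldots,i_k\}$ and $1$ otherwise. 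Repeating the same computation on each single-index sub-product $\varphi_n\bigl(\prod_{l:i_l=p}b_{n,p}^{\epsilon(l)}\bigr)$ yields the identical slot-$p$ factor, and multiplying over $p$ produces the desired factorization. The only genuine obstacle throughout is the careful slot-by-slot bookkeeping required to justify these claims; every sign that could in principle obstruct either the commutation relation or the factorization is absorbed by the observation that $\sigma_{\pm 1}e_0=e_0$.
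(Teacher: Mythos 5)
Your argument is correct and follows essentially the same route the paper takes for its two-parameter analogue (Lemma~\ref{thm-JW-2}): commute the $2\times 2$ factors slot by slot via $\gamma\sigma_x=x\,\sigma_x\gamma$ (the $t=1$, $x\in\{-1,1\}$ case of $\gamma\sigma_x=\sqrt{t}\,x\,\sigma_x\gamma$), and then verify Conditions (1)--(4) using that $\varphi_n$ factors over tensor slots as $\prod_k\langle A_k e_0,e_0\rangle$ together with $\sigma_{\pm1}e_0=e_0$. The only difference is that you spell out the naturally-ordered factorization, which the paper dismisses as immediate; your slot-$p$ bookkeeping is a valid justification of that step.
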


\subsection{Main Results}
This article derives a general form of the Non-commutative Central Limit Theorem of \cite{Speicher1992}. The setting now concerns a sequence  $\{b_i\}_{i\in\mathbb N}$ of non-commutative random variables satisfying the commutation relation
\begin{equation}b_i^{\epsilon}b_j^{\epsilon'}\,\,=\,\,\mu_{\epsilon',\epsilon}(j,i)\,b_j^{\epsilon'}b_i^{\epsilon}\quad\quad\text{with }\,\,\epsilon,\epsilon'\in\{1,\ast\},\,\mu_{\epsilon',\epsilon}(i,j)\in \mathbb R,\label{comrelB}\end{equation}
for $i\neq j$. The consistency of the above commutation relation is ensured by requiring that for all $i<j$ and $\epsilon,\epsilon'\in\{1,\ast\}$
\begin{align}&\mu_{1,1}(i,j)=\frac{1}{\mu_{\ast,\ast}(i,j)},&\mu_{1,\ast}(i,j)=\frac{1}{\mu_{\ast,1}(i,j)},\label{prescription1}&\\ 
&\mu_{\ast,1}(i,j)=t\,\mu_{\ast,\ast}(i,j),&\mu_{\epsilon',\epsilon}(j,i)=\frac{1}{\mu_{\epsilon,\epsilon'}(i,j)},\label{prescription2}&\end{align}
where $t>0$ is a fixed parameter that will appear explicitly in the limits of interest. 
The reader is referred to the beginning of Section~\ref{sec-CLT1} and the Remark~\ref{relationRemark} of Section~\ref{sec-CLT2} for a discussion of the relations \eqref{prescription1}--\eqref{prescription2}. The conditions underlying the extended non-commutative CLT are now the following.

\begin{assump}
Given a $\ast$-algebra $\mathscr A$ and a state $\varphi:\mathscr A\to\mathbb C$, consider a sequence $\{b_i\}_{i\in\mathbb N}$ of elements of $\mathscr A$ satisfying the following:
\begin{enumerate}
\item for all $i\in\mathbb N$, $\varphi(b_i)=\varphi(b_i^\ast)=\varphi(b_ib_i)=\varphi(b_i^\ast b_i^\ast)=\varphi(b_i^\ast b_i)=0$;
\item for all $i,j\in\mathbb N$ and $\epsilon,\epsilon'\in\{1,\ast\}$, $\varphi(b_i^{\epsilon}b_i^{\epsilon'}) = \varphi(b_j^{\epsilon}b_j^{\epsilon'})$; 
\item for all $n\in\mathbb N$ and all $j(1),\ldots,j(n)\in\mathbb N$, $\epsilon(1),\ldots,\epsilon(n)\in\{1,\ast\}$, the corresponding mixed moment is uniformly bounded, i.e. $|\varphi(\prod_{i=1}^nb_{j(i)}^{\epsilon(i)})|\leq \alpha_n$ for some non-negative real $\alpha_n$;
\item $\varphi$ factors over the naturally ordered products in $\{b_i\}_{i\in\mathbb N}$, in the sense of Definition~\ref{natordprod}.
\end{enumerate}

Assume that for all $i\neq j$ and all $\epsilon,\epsilon'\in\{1,\ast\}$, $b_i^{\epsilon(1)}$ and $b_j^{\epsilon(2)}$ satisfy the commutation relation
\begin{equation}b_i^{\epsilon}b_j^{\epsilon'}\,\,=\,\,\mu_{\epsilon',\epsilon}(j,i)\,b_j^{\epsilon'}b_i^{\epsilon},\quad\quad\quad\mu_{\epsilon',\epsilon}(j,i)\in\mathbb R\,.\label{comrelassump}\end{equation}
\label{assump-CLT}
\end{assump}

\begin{thm}[Extended Non-commutative CLT]
Consider a noncommutative probability space $(\A,\varphi)$ and a sequence of elements $\{b_i\}_{i\in\mathbb N}$ in $\A$ satisfying Condition~\ref{assump-CLT}. Fix $q\in\mathbb R$, $t>0$ and let $\{\mu(i,j)\}_{1\leq i< j}$ be drawn from a collection of independent, identically distributed, non-vanishing random variables, with
\begin{equation}\E(\mu(i,j))=qt^{-1}\in\mathbb R,\quad\quad \E(\mu(i,j)^2)=1.\label{eq-prob}\end{equation}
Letting $\mu_{\ast,\ast}(i,j)=\mu(i,j)$ for $1\leq i<j$, populate the remaining $\mu_{\epsilon,\epsilon'}(i,j)$, for $\epsilon,\epsilon'\in\{1,\ast\}$ and $i\neq j$ ($i,j\in\mathbb N$), by \eqref{prescription1} and \eqref{prescription2}.

Then, for almost every sequence  $\{\mu(i,j)\}_{i\leq j}$, the following holds: for every $n\in\mathbb N$ and all $\epsilon(1),\ldots,\epsilon(2n)\in \{1,\ast\}$,
\begin{align}&\lim_{N\to\infty}\varphi(S_N^{\epsilon(1)}\ldots S_N^{\epsilon(2n-1)})=0,\label{eqLimit1B}&\\
&\lim_{N\to\infty}\varphi(S_N^{\epsilon(1)}\ldots S_N^{\epsilon(2n)})=\sum_{\mathscr V\in \mathscr P_2(2n)}q^{\cross(\mathscr V)}t^{\nest(\mathscr V)}\prod_{i=1}^n\varphi(b^{\epsilon(w_i)}b^{\epsilon(z_i)})\label{eqLimit2B}&\end{align}
with $S_N\in\A$ as given in \eqref{eq-SN}, $\mathscr V=\{(w_1,z_1),\ldots,(w_n,z_n)\}$, and where $\cross(\mathscr V)$ denotes the number of crossings in $\mathscr V$ and $\nest(\mathscr V)$ the number of nestings in $\mathscr V$ (cf. Definition~\ref{defCrossNest}). \label{CLT1}
\end{thm}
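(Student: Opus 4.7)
The plan is to mirror the proof of Theorem~\ref{thm-CLT-Sp} in \cite{Speicher1992}, replacing the sign factor $s(i,j) \in \{-1, 1\}$ by the real-valued random variable $\mu(i,j)$ and isolating a two-parameter refinement at the combinatorial step where both crossings and nestings appear. First I would expand each $\varphi(S_N^{\epsilon(1)} \cdots S_N^{\epsilon(m)}) = N^{-m/2} \sum_{j : [m] \to [N]} \varphi(b_{j(1)}^{\epsilon(1)} \cdots b_{j(m)}^{\epsilon(m)})$ by multilinearity and group multi-indices by their kernel partition $\pi$ of $[m]$. Using \eqref{comrelassump} repeatedly, I would bring each product into naturally ordered form; this produces a product of $\mu$-factors, and then condition~(4) of Condition~\ref{assump-CLT} splits $\varphi$ over blocks of $\pi$. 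By condition~(1), any block of size one, or any block of size two whose induced $\epsilon$-pattern differs from $(1, \ast)$, forces the factor to vanish. A kernel $\pi$ with $|\pi|$ blocks yields $\sim N^{|\pi|}$ multi-indices against the normalization $N^{m/2}$, and since every surviving block has size $\geq 2$ we have $|\pi| \leq m/2$ with equality only for pair partitions ($m$ even); for $m = 2n-1$ odd no such pair partition exists, which gives \eqref{eqLimit1B}.

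The central new calculation is to evaluate the $\mu$-product corresponding to a pair partition $\mathscr V$ of $[2n]$ and a specific assignment of distinct indices $\{i_B\}_{B \in \mathscr V}$ to its blocks. Fix two blocks $B = (w,z), B' = (w',z')$ of $\mathscr V$. The surviving $\epsilon$-pattern forces $\epsilon(w) = \epsilon(w') = 1$ and $\epsilon(z) = \epsilon(z') = \ast$, and a direct reordering under \eqref{comrelassump} using \eqref{prescription1}--\eqref{prescription2}, carried out for each of the three topological cases (disjoint, crossing, nesting) and for both orderings of $i_B, i_{B'}$, yields a factor that is identically $1$ for disjoint blocks, always equal to $t\,\mu(m,M)$ (with $m = \min(i_B, i_{B'})$ and $M = \max(i_B, i_{B'})$) for a crossing, and equal to either $t\,\mu(m,M)^2$ or $t$ for a nesting depending on whether the outer or the inner block carries the smaller index. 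Distinct pairs of blocks involve distinct index pairs, hence independent $\mu$'s; averaging each crossing-factor against $\mathbb E[\mu] = qt^{-1}$ gives $q$, each nesting-factor against $\mathbb E[\mu^2] = 1$ gives $t$, and the deterministic pair-moments $\varphi(b^{\epsilon(w)} b^{\epsilon(z)})$ factor out of the sum, yielding \eqref{eqLimit2B} in expectation.

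To upgrade to almost-sure convergence I would carry out a higher-moment estimate on the normalized random sum $X_N(\mathscr V, \epsilon)$ over multi-indices of kernel $\mathscr V$. Two multi-indices $j, j'$ contribute $\mu$-products in independent variables unless they share at least one block-index, and such pairs form an $O(N^{2n-1})$ subset of the $N^{2n}$ total, which gives a variance bound $O(1/N)$ and, by an analogous count at fourth order, a fourth-central-moment bound $O(1/N^2)$. Markov's inequality and Borel-Cantelli then furnish almost-sure convergence for each triple $(n, \mathscr V, \epsilon)$, and a countable intersection completes the a.s.\ statement. The main obstacle is the bookkeeping around nestings: unlike Speicher's $\{\pm 1\}$ setup, the $\mu$-factor for a nesting is not invariant under swapping the two block-indices --- it is $t\mu^2$ in one ordering and $t$ in the other --- so the almost-sure argument must exploit the non-obvious equality of their expectations. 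A secondary technicality is the need for finiteness of a few $\mu$-moments (controlling $\mathbb E[\mu^4]$ or $\mathbb E[\mu^{-2}]$), which can be arranged by augmenting \eqref{eq-prob} or via a standard truncation argument.
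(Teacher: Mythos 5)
Your proposal is correct and follows the paper's strategy almost step for step: the same kernel-partition decomposition, with singletons killed by centering and the $\binom{N}{\ell}/N^{r/2}$ count killing every partition except pair partitions for even moments (and everything for odd moments); the same reordering bookkeeping, which assigns the factor $t\,\mu(m,M)$ to each crossing and $t\,\mu(m,M)^2$ or $t$ to each nesting according to which block carries the smaller index; and the same expectation computation yielding $q^{\cross(\mathscr V)}t^{\nest(\mathscr V)}$ for the surviving $(1,\ast)$-patterned blocks. (The paper merely packages this as a deterministic CLT, Theorem~\ref{CLT}, plus the stochastic Lemma~\ref{lemRandom}, whereas you fuse the two; that is organizational, not substantive.) The one genuine divergence is the almost-sure step. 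The paper exploits that the $\mu$'s are indexed by \emph{pairs} of indices: two multi-indices whose value sets share at most one element produce fully independent $\mu$-products, so only the $\Theta(M^{2n-2})$ pairs sharing at least two values enter the variance, which is therefore $O(M^{-2})$, summable, and Chebyshev with a union bound finishes the argument at second order. Your weaker dependence criterion (any single shared index) only gives a variance of order $1/N$, which is exactly why you are forced to the fourth-central-moment bound $O(1/N^{2})$ plus Borel--Cantelli; that route is valid (a multi-index sharing no value with the other three factors out and centers to zero, and the no-isolated-vertex constraint costs at least two powers of $N$), but it requires control of moments of $\mu$ up to the eighth, whereas the paper's second-moment argument needs only fourth moments --- a finiteness hypothesis that, as you rightly flag, is not literally contained in \eqref{eq-prob} and is tacitly used in the paper's own Cauchy--Schwarz bound, so your truncation/augmentation remark applies to both arguments.
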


The generalized commutation structure of Theorem~\ref{CLT1}, compared to  Theorem~\ref{thm-CLT-Sp}, generates a second combinatorial statistic in the limiting moments -- that of \emph{nestings} in pair partitions. This refinement also extends to the Fock-space level, with the above limits realized as moments of creation and annihilation operators on the \emph{$(q,t)$-Fock space} \cite{Blitvic1}, briefly overviewed in Section~\ref{preliminaries}. Compared to \eqref{qcomm}, the operators of interest now satisfy the commutation relation
\begin{equation}a_{q,t}(f)a_{q,t}(g)^\ast -q \,a_{q,t}(g)^\ast a_{q,t}(f)= \langle f,g\rangle_{_\H}\, t^{N},\label{qtcomm}\end{equation}
where $N$ is the \emph{number operator}. Note that \eqref{eq-prob}, together with the fact that $t>0$, recovers the fundamental constraint that $|q|<t$ in order for the $(q,t)$-Fock space to be a bona fide Hilbert space.

The extended non-commutative CLT of Theorem~\ref{CLT1} requires that $\varphi(b_ib_i)=\varphi(b_i^\ast b_i^\ast)=\varphi(b_i^\ast b_i)=0$ (cf. Condition~\ref{assump-CLT}), not generally needed in Speicher's setting (Condition~\ref{assump-CLT-Sp}) except in the asymptotic models for the $q$-commutation relation \eqref{qcomm}. Remark~\ref{orderingRemark} of Section~\ref{sec-CLT1} and Remark~\ref{limitRemark} of Section~\ref{sec-CLT2} discuss the existence of the limits \eqref{eqLimit1B} and \eqref{eqLimit2B} and the form they take in the absence of this requirement. Nevertheless, this additional condition is in fact consistent with the natural choice of matrix models -- specifically, extending Lemma~\ref{lem-JW-Bia}, the following is a generalized Jordan-Wigner construction. The underlying probability space $(\A_n,\varphi_n)$ remains that of the previous section.

\begin{lem}[Two-parameter Jordan-Wigner Transform]
Fix $q\in\mathbb R$, $t>0$ and let $\{\mu_{\epsilon,\epsilon'}(i,j)\}_{i\neq j,\epsilon,\epsilon'\in\{1,\ast\}}$ be a sequence of commutation coefficients, i.e. a sequence of non-zero real numbers satisfying \eqref{prescription1} and \eqref{prescription2}. Consider the $2\times 2$ matrices $\{\sigma_x\}_{x\in\mathbb R},\gamma$ given by 
$$\sigma_{x}=\left[\begin{array}{cc}1&0\\0&\sqrt{t}\,x\end{array}\right],\quad \quad\gamma=\left[\begin{array}{cc}0&1\\0&0\end{array}\right].$$ For $i=1,\ldots,n$, let $\mu(i,j):=\mu_{\ast,\ast}(i,j)$ and consider the element $b_{n,i}\in\mathcal M_2(\mathbb R)^{\otimes n}$ given by
\begin{equation}b_{n,i}=\sigma_{\mu(1,i)}\otimes \sigma_{\mu(2,i)}\otimes\ldots\otimes\sigma_{\mu(i-1,i)}\otimes \gamma\otimes\underbrace{\sigma_{1}\otimes\ldots\otimes\sigma_{1}}_{=\sigma_1^{\otimes (n-i)}}.\label{eq-JW-B}\end{equation} 
Then, for every $n\in\mathbb N$, the non-commutative probablity space $(\A_n,\varphi_n)$ and the elements $b_{n,1},b_{n,2},$ $\ldots, b_{n,n}\in\A_n$ satisfy Condition~\ref{assump-CLT}.
\label{thm-JW-2}
\end{lem}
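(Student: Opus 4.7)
The plan is to verify the four clauses of Condition~\ref{assump-CLT} together with the commutation relation \eqref{comrelassump} for the finite family $\{b_{n,i}\}_{i=1}^{n}\subset\A_n$. All verifications reduce to local calculations on the $2\times 2$ building blocks $\sigma_x,\gamma,\gamma^\ast$ at individual tensor slots of $\mathcal{M}_2(\R)^{\otimes n}$.

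The heart of the argument is the commutation relation. A direct multiplication yields the local rewriting rules
$$\gamma\,\sigma_x \;=\; \sqrt{t}\,x\,\sigma_x\,\gamma,\qquad \gamma^\ast\sigma_x \;=\; \frac{1}{\sqrt{t}\,x}\,\sigma_x\,\gamma^\ast,$$
valid for every nonzero $x\in\R$, with the special cases $\sigma_1\gamma=\frac{1}{\sqrt{t}}\,\gamma\sigma_1$ and $\sigma_1\gamma^\ast=\sqrt{t}\,\gamma^\ast\sigma_1$ at $x=1$. For $i<j$, the tensor expressions \eqref{eq-JW-B} for $b_{n,i}^\epsilon$ and $b_{n,j}^{\epsilon'}$ agree at every slot except position $i$, where one commutes $\gamma^\epsilon$ past $\sigma_{\mu(i,j)}$, and position $j$, where one commutes $\sigma_1$ past $\gamma^{\epsilon'}$; the remaining slots carry diagonal matrices and commute trivially. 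Multiplying the two resulting scalar factors over the four choices $(\epsilon,\epsilon')\in\{1,\ast\}^2$ yields $\mu(i,j)$, $t\mu(i,j)$, $1/(t\mu(i,j))$, and $1/\mu(i,j)$ respectively, which a case-by-case comparison against \eqref{prescription1}--\eqref{prescription2} identifies with $\mu_{\epsilon',\epsilon}(j,i)$. The case $i>j$ then follows from the same computation via the identity $\mu_{\epsilon',\epsilon}(j,i)=1/\mu_{\epsilon,\epsilon'}(i,j)$.

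The state conditions (1)--(2) follow from the observations $\gamma|0\rangle=0$, $\gamma^2=(\gamma^\ast)^2=0$, $\gamma^\ast\gamma|0\rangle=0$, $\gamma\gamma^\ast|0\rangle=|0\rangle$, together with $\sigma_x|0\rangle=|0\rangle$ for every $x$; these yield $\varphi_n(b_{n,i})=\varphi_n(b_{n,i}^\ast)=\varphi_n(b_{n,i}^2)=\varphi_n((b_{n,i}^\ast)^2)=\varphi_n(b_{n,i}^\ast b_{n,i})=0$ and $\varphi_n(b_{n,i}b_{n,i}^\ast)=1$ uniformly in $i$. Clause (3) is automatic in the finite-dimensional algebra $\A_n=\mathcal{M}_{2^n}(\R)$, each $b_{n,i}$ having finite operator norm. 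For clause (4), the factorization follows from the tensor-product structure: in a naturally ordered product, each tensor slot collects at most one word in $\gamma,\gamma^\ast$ (from the block whose index matches the slot) flanked by diagonal factors whose $(1,1)$ entries equal $1$; consequently, the $(1,1)$ entry at each slot is independent of the non-matching blocks, and the matrix element against $|0\rangle^{\otimes n}$ factorizes over the blocks.

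The main obstacle is the scalar bookkeeping in the commutation step: for each of the four $(\epsilon,\epsilon')$ pairs the product of the two local $\sqrt{t}$-contributions must reconcile exactly with the prescribed value $\mu_{\epsilon',\epsilon}(j,i)$ determined by \eqref{prescription1}--\eqref{prescription2}, a verification that is routine but where the placement of $t$ is essential.
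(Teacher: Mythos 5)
Your proposal is correct and follows essentially the same route as the paper's proof: a slot-by-slot computation in $\mathcal M_2(\mathbb R)^{\otimes n}$ using the local rules $\gamma\sigma_x=\sqrt{t}\,x\,\sigma_x\gamma$ and $\gamma^\ast\sigma_x=(\sqrt{t}\,x)^{-1}\sigma_x\gamma^\ast$ (the paper treats two of the four $(\epsilon,\epsilon')$ cases and gets the rest by adjoints, while you do all four directly and also spell out the factorization over naturally ordered products that the paper dismisses as immediate). The only quibble is the phrase that $b_{n,i}^\epsilon$ and $b_{n,j}^{\epsilon'}$ ``agree at every slot except $i$ and $j$'' --- they do not literally agree elsewhere (e.g.\ $\sigma_{\mu(k,i)}$ versus $\sigma_{\mu(k,j)}$ at slots $k<i$), but since those entries are diagonal they commute, which is the property you actually use, so the argument stands.
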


Finally, analogously to \cite{Biane1997b}, Theorem~\ref{CLT1} and Lemma~\ref{thm-JW-2} together yield an asymptotic random matrix models for the creation, annihilation, and field operators on the $(q,t)$-Fock space: 
\begin{cor}
Consider a sequence of commutation coefficients drawn according to Theorem~\ref{CLT1} and the corresponding matrix construction of Lemma~\ref{thm-JW-2}. Let
\begin{equation}S_{N,k}:=\frac{1}{\sqrt N}\sum_{i=N(k-1)+1}^{Nk}b_{Nk,i}.\end{equation}
Then, for any choice of $k,i(1),\ldots,i(k)\in\mathbb N$, $\epsilon(1),\ldots,\epsilon(k)\in\{1,\ast\}$,
\begin{align}&\lim_{N\to\infty}\varphi_{Nk}(S_{N,i(1)}^{\epsilon(1)}\ldots S_{N,i(k)}^{\epsilon(k)})=\varphi_{q,t}(a_{q,t}(e_1)^{\epsilon(1)}\ldots a_{q,t}(e_{k})^{\epsilon(k)})\end{align}\label{thm-asymptoticB}
and
\begin{align}&\lim_{N\to\infty}\varphi_{Nk}((S_{N,i(1)}+S_{N,i(1)}^\ast)\ldots (S_{N,i(k)}+S_{N,i(k)}^\ast))=\varphi_{q,t}(s_{q,t}(e_1)\ldots s_{q,t}(e_{k}))\end{align}\label{thm-asymptoticB}
where $\varphi_{q,t}$ is the vacuum expectation state on the $(q,t)$-Fock space $\F_{q,t}(\H)$, operator $a_{q,t}(e_i)$ is the twisted annihilation operator on $\F_{q,t}(\H)$ associated with the element $e_i$ of the orthonormal basis of $\H$, and $s_{q,t}(e_i):=a_{q,t}(e_i)+a_{q,t}(e_i)^\ast$ is the corresponding field operator.\label{cor1}
\end{cor}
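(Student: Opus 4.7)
The plan is to apply Theorem~\ref{CLT1} to the Jordan-Wigner matrix model of Lemma~\ref{thm-JW-2} and to match the resulting Wick-type expression with the vacuum moment formula on the $(q,t)$-Fock space. By Lemma~\ref{thm-JW-2}, for each $n$ the sequence $\{b_{n,i}\}_{i=1}^n$ in $(\A_n,\varphi_n)$ satisfies Condition~\ref{assump-CLT} with commutation coefficients drawn from the prescribed i.i.d.\ distribution, so the conclusion of Theorem~\ref{CLT1} holds on a set of full measure in the coefficient-sequence space. What the corollary actually requires is a multi-block version in which the $k$ factors $S_{N,i(p)}^{\epsilon(p)}$ draw their $b$-terms from $k$ disjoint intervals of length $N$ inside $\{1,\ldots,Nk\}$, producing in the limit a sum over pair-partitions $\mathscr V\in\mathscr P_2(k)$ whose pairs respect the color assignment $p\mapsto i(p)$.

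To obtain this multi-block version, I would expand $\varphi_{Nk}(S_{N,i(1)}^{\epsilon(1)}\cdots S_{N,i(k)}^{\epsilon(k)})$ as an $N^{-k/2}$-weighted sum over $k$-tuples of indices $(j_1,\ldots,j_k)$ with $j_p$ in the $i(p)$-th block, and then rerun the argument underlying Theorem~\ref{CLT1}: the commutation relation reorders each term into a canonically ordered product to which the factorization hypothesis (item 4 of Condition~\ref{assump-CLT}) applies, yielding a product of one-index second moments $\varphi(b_i^{\epsilon}b_i^{\epsilon'})$ over grouped repeated indices, weighted by a product of commutation coefficients. The usual counting estimate shows that only pair-partitioned index tuples survive the $1/\sqrt{N}$ normalization as $N\to\infty$, and such tuples force $i(w_j)=i(z_j)$ for each pair since a repeated index must lie in a single block. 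The almost-sure law-of-large-numbers step, powered by $\E\mu=qt^{-1}$ and $\E\mu^2=1$, then converts the commutation-factor products into the $q^{\cross(\mathscr V)}t^{\nest(\mathscr V)}$ weight, exactly as in Theorem~\ref{CLT1}.

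The third step is to compute the surviving second moments from the explicit matrix model. With $e_0=(1,0)\in\R^2$, one has $\sigma_x e_0=e_0$, $\gamma e_0=0$ and $\gamma\gamma^T e_0=e_0$, so a direct tensor computation gives $\varphi_n(b_{n,i}b_{n,i}^\ast)=1$ while $\varphi_n(b_{n,i}b_{n,i})=\varphi_n(b_{n,i}^\ast b_{n,i}^\ast)=\varphi_n(b_{n,i}^\ast b_{n,i})=0$. Each surviving pair $(w_j,z_j)$ therefore contributes the factor $\delta_{i(w_j),i(z_j)}=\langle e_{i(w_j)},e_{i(z_j)}\rangle_\H$ precisely when $(\epsilon(w_j),\epsilon(z_j))=(1,\ast)$, and vanishes otherwise. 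This matches exactly the Wick-type vacuum moment formula on $\F_{q,t}(\H)$ recalled in Section~\ref{preliminaries}, proving the first identity. The second identity then follows by bilinearly expanding $S_{N,i(p)}+S_{N,i(p)}^\ast$ and $s_{q,t}(e_{i(p)})$ into $2^k$ corresponding monomials and applying the first identity term by term.

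The main obstacle I anticipate is the multi-block extension outlined in the second paragraph. It is essentially routine—the proof of Theorem~\ref{CLT1} depends only on the commutation relations, the factorization of $\varphi$ over naturally ordered products, and a strong law applied to products of the $\mu(i,j)$'s, none of which requires the $b$'s to be drawn from a single summand range—but one must track the color assignment $p\mapsto i(p)$ carefully through the reordering and factorization steps, and verify that the almost-sure event on which convergence holds can be chosen uniformly across all $k$ and all $(i(1),\ldots,i(k))$, which is automatic since these range over countable sets.
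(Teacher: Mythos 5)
Your proposal is correct and takes essentially the same route as the paper: apply Theorem~\ref{CLT1} to partial sums built from disjoint blocks of the Jordan--Wigner variables of Lemma~\ref{thm-JW-2}, observe that only within-block pairs survive and contribute $\varphi_n(b_{n,i}b_{n,i}^\ast)=1$ precisely for the pattern $(1,\ast)$, and match the resulting $q^{\cross(\mathscr V)}t^{\nest(\mathscr V)}$-weighted Wick formula with the $(q,t)$-Fock vacuum moments, extending to field operators by linearity. The only difference is that you spell out the multi-block rerun of the CLT argument and the countability of the almost-sure events, which the paper leaves implicit in its remark about ``non-intersecting subsets.''
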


\label{introduction}

\section{Preliminaries}
\label{preliminaries}
The present section overviews the key combinatorial constructs used to encode the mechanics of the non-commutative Central Limit Theorem. It also overviews the Hilbert space framework that provides a natural setting in which to realize the limits of the random matrix models of Corollary~\ref{cor1}.

\subsection{Partitions}

Denote by $\mathscr P(n)$ the collection of partitions of $[n]:=\{1,\ldots,n\}$. Set partitions will be extensively used to encode equivalence classes of products of random variables, based on the repetition patterns of individual elements. Specifically, any two $r$-vectors will be declared equivalent if element repetitions occur at same locations in both vectors; i.e.
for $(i(1),\ldots,i(r)),(j(1),\ldots,j(r))\in [N]^r$,
\begin{eqnarray}(i(1),\ldots,i(r))\sim (j(1),\ldots,j(r))&\iff&\text{ for all } 1\leq k_1<k_2\leq r,\nonumber\\
&&i(k_1)=i(k_2)\text{ iff }j(k_1)=j(k_2).\label{eqSim}\end{eqnarray}
It then immediately follows that the equivalence classes of ``$\sim$'' can be identified with the set $\mathscr P(r)$ of the partitions of $[r]$. An example is shown in Figure~\ref{example-equiv}. Note that writing ``$(i(1),\ldots,i(r))\sim\mathscr V$'' will indicate that $(i(1),\ldots,i(r))$ is in the equivalence class identified with the partition $\mathscr V\in\mathscr P(r)$.
\begin{figure}[h]\centering
\includegraphics[scale=0.5]{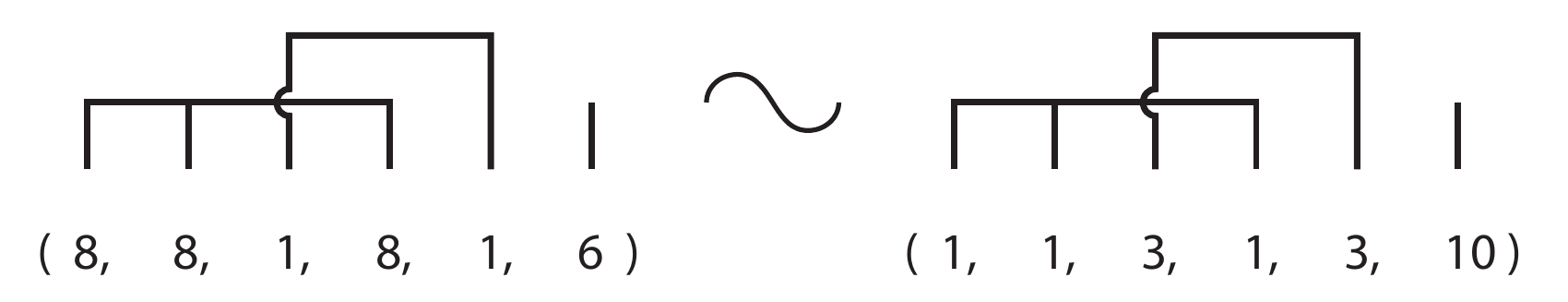}
\caption{Two elements of $[N]^r$ (for $N=10,r=6$) that belong to the same equivalence class, where the latter is represented as the corresponding partition $\mathscr V\in \mathscr P(r)$ given by $\mathscr V=\{(1,2,4),(3,5),(6)\}$.}\label{example-equiv}
\end{figure}

Particularly relevant is the collection $\mathscr P_2(2n)$ of \emph{pair partitions} of $[2n]$, also referred to as \emph{pairings} or \emph{perfect matchings}, which are partitions whose each part contains exactly two elements. A pair partition will be represented as a list of ordered pairs, that is, $\mathscr P_2(2n)\ni\mathscr V=\{(w_1,z_1),\ldots,(w_n,z_n)\}$, where $w_i<z_i$ for $i\in[n]$ and $w_1<\ldots<w_n$. In the present setting, the pair partitions will typically appear with additional refinements given by the following two statistics on $\mathscr P_2(2n)$.

\begin{defn}[Crossings and Nestings] For $\mathscr V=\{(w_1,z_1),\ldots,(w_{n},z_{n})\}\in\mathscr P_2(2n)$, pairs $(w_i,z_i)$ and $(w_j,z_j)$ are said to \emph{cross} if $w_i<w_j<z_i<z_j$. The corresponding \emph{crossing} is encoded by $(w_i,w_j,z_i,z_j)$ with 
$\Cross(\mathscr V):=\{(w_i,w_j,z_i,z_j)\mid (w_i,z_i),(w_j,z_j)\in\mathscr V\text{ with } w_i<w_j<z_i<z_j\}$ as the set of all crossings in $\mathscr V$ and $\cross(\mathscr V):=|\Cross(\mathscr V)|$ counting the crossings in $\mathscr V$.

For $\mathscr V=\{(w_1,z_1),\ldots,(w_{n},z_{n})\}\in\mathscr P_2(2n)$, pairs $(w_i,z_i)$ and $(w_j,z_j)$ are said to \emph{nest} if $w_i<w_j<z_j<z_i$. The corresponding \emph{nesting} is encoded by $(w_i,w_j,z_j,z_i)$ with
$\Nest(\mathscr V):=\{(w_i,w_j,z_j,z_i)\mid (w_i,z_i),(w_j,z_j)\in\mathscr V\text{ with } w_i<w_j<z_j<z_i\}$ as the set of all nestings in $\mathscr V$ and $\nest(\mathscr V):=|\Nest(\mathscr V)|$ counting the nestings in $\mathscr V$.
\label{defCrossNest}
\end{defn}
The two concepts are illustrated in Figures~\ref{crnest1} and \ref{crnest2}, by visualizing the pair partitions as collections of disjoint chords with end-points labeled (increasing from left to right) by elements in $[2n]$. 

\begin{figure}\centering\includegraphics[scale=0.5]{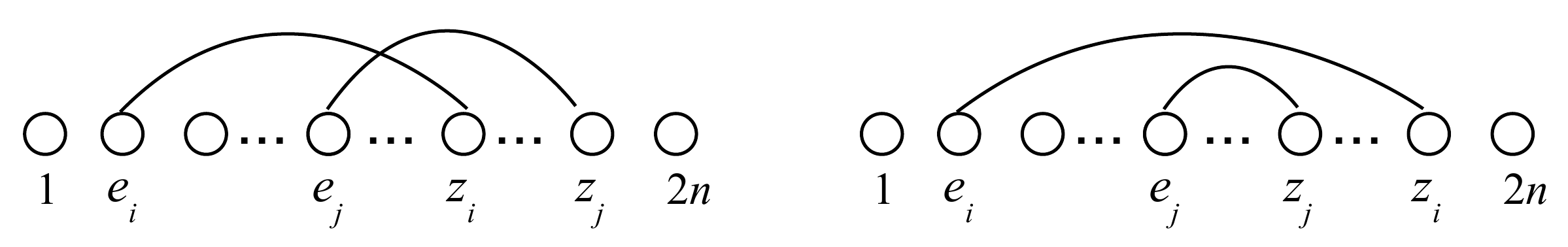}
\caption{An example of a crossing [left] and nesting [right] of a pair partition $\mathscr V=\{(e_1,z_1),\ldots,(e_{n},z_{n})\}$ of $[2n]$.}
\label{crnest1}
\end{figure}
\begin{figure}\centering\includegraphics[scale=0.5]{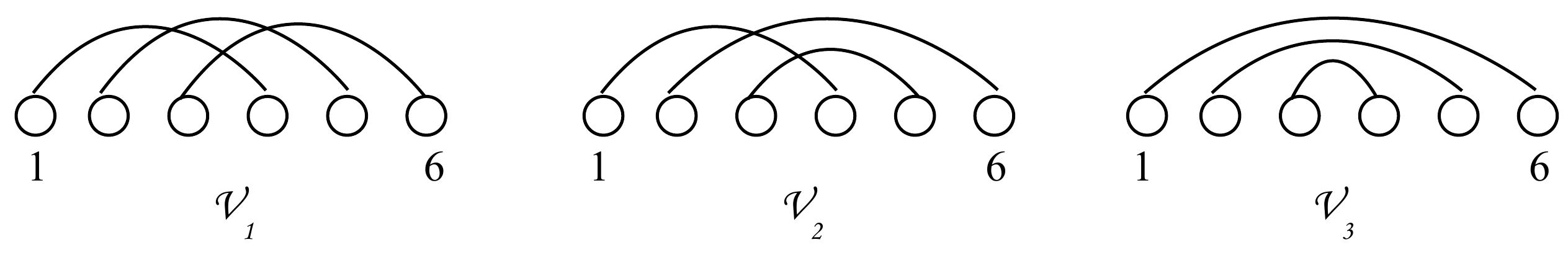}
\caption{Example of three pair partitions on $[2n]=\{1,\ldots,6\}$: $\cross(\mathscr V_1)=3,\,\nest(\mathscr V_1)=0$ [left], $\cross(\mathscr V_2)=2,\,\nest(\mathscr V_2)=1$ [middle], $\cross(\mathscr V_3)=0,\,\nest(\mathscr V_3)=3$ [right].}
\label{crnest2}
\end{figure}

\subsection{Operators on the $(q,t)$-Fock space}
\label{qt_Fock}

The $(q,t)$-Fock space $\F_{q,t}(\H)$ \cite{Blitvic1}, for $|q|<t$, is a two-parameter deformation of the classical Bosonic and Fermionic Fock spaces. Consider the tensor algebra on the Hilbert space $\H$ (taken as real and separable) given by
$\F(\H):=\bigoplus_{n\geq 0} (\mathbb C\otimes \H)^{\otimes n},$
with $(\mathbb C\otimes \H)^0$ defined as a complex vector space spanned by a real unit vector $\Omega\not\in\mathscr H$. The algebra $\F(\H)$ is spanned by the pure tensors $\{h_1\otimes\ldots\otimes h_n\mid n\in\mathbb N, h_1,\ldots,h_n\in\H\}\cup\{\Omega\}$.
The completion of $\F(\H)$ with respect to the usual inner product, denoted $\langle\,,\,\rangle_0$ and given by $\langle \Omega,\Omega\rangle_0$ and $\langle f_1\ldots f_n, h_1\ldots h_m\rangle_0=\delta_{n,m}\langle f_1, h_1\rangle_\H\ldots \langle f_n, h_n\rangle_\H$, yields the \emph{full (Boltzmann) Fock space}. In the present scenario, it will be more interesting to complete with respect to the ``$(q,t)$-symmetrized" inner product $\langle\,,\,\rangle_{q,t}$ given by $\langle \Omega,\Omega\rangle_{q,t}=1$ and 
\begin{eqnarray}
\langle f_1\otimes \ldots \otimes f_n,h_1\otimes\ldots\otimes h_m\rangle_{q,t}=\left\{\begin{array}{ll}0,&n\neq m\\
t^{n\choose 2}\sum_{\pi\in S_n}(qt^{-1})^{\inv(\pi)}\langle f_1,h_{\pi(1)}\rangle_\H\ldots \langle f_n,h_{\pi(n)}\rangle_\H,&n=m
\end{array}\right.
\end{eqnarray}
where $\inv(\pi)$ denotes the number of inversions of the permutation $\pi\in S_n$. The completion of $\F(\H)$ with respect to $\langle\,,\,\rangle_{q,t}$ yields the \emph{$(q,t)$-Fock space}  $\F_{q,t}(\H)$, where letting $t=1$ recovers the $q$-Fock space $\F_q(\H)$ of Bo\.zejko and Speicher \cite{Bozejko1991}.

The annihilation operators $\{a_{q,t}(h)\}_{h\in\H}$ on $\F_{q,t}(\H)$  and their adjoints (with respect to $\langle\,,\,\rangle_{q,t}$), the creation operators $\{a_{q,t}(h)^\ast\}_{h\in\H}$, are densely defined on $\F(\H)$ by
\begin{eqnarray}a_{q,t}(f)^\ast \Omega = f,&& a_{q,t}(f)^\ast h_1\otimes\ldots\otimes h_n=f\otimes h_1\otimes\ldots\otimes h_n\label{cdef1}\\
a_{q,t}(f)\Omega = 0, && a_{q,t}(f)h_1\otimes\ldots\otimes h_n=\sum_{k=1}^n q^{k-1}t^{n-k} \langle f,h_k\rangle_{_\H}\,  h_1\otimes \ldots\otimes\breve h_k\otimes\ldots\otimes h_n,\label{cdef2}\end{eqnarray}
where the superscript $\breve h_k$ indicates that $h_k$ has been deleted from the product. Letting $t^N$ be the linear operator defined by $t^N\Omega=\Omega$ and $t^N h_1\otimes\ldots\otimes h_n=t^n h_1\otimes\ldots\otimes h_n$, the creation and annihilation operators are readily shown to satisfy the $(q,t)$-commutation relation \eqref{qtcomm}. The two-parameter family of the (self-adjoint) field operators $s_{q,t}(h):=a_{q,t}(h)+a_{q,t}(h)^\ast$, for $h\in\H$, is referred to as a $(q,t)$-Gaussian family.
The \emph{moments} of the creation, annihilation, and field operators are computed with respect to the vacuum expectation state $\varphi_{q,t}:\mathscr B(\F_{q,t}(\H))\to\mathbb C$, $\varphi_{q,t}(a)=\langle\, a\,\Omega,\Omega\rangle_{q,t}$. In particular, for every $n\in\mathbb N$ and all $\epsilon(1),\ldots,\epsilon(2n)\in \{1,\ast\}$,
\begin{align}\displaystyle&\varphi_{q,t}(a_{q,t}(h_1)^{\epsilon(1)}\ldots a_{q,t}(h_{2n-1})^{\epsilon(2n-1)})=0,&\\
&\varphi_{q,t}(a_{q,t}(h_1)^{\epsilon(1)}\ldots a_{q,t}(h_{2n})^{\epsilon(2n)})=\sum_{\mathscr V\in \mathscr P_{2}(2n)}q^{\cross(\mathscr V)}t^{\nest(\mathscr V)}\prod_{i=1}^n\varphi(a_{q,t}(h_{w_i})^{\epsilon(w_i)}a_{q,t}(h_{z_i})^{\epsilon(z_i)}),&\\
&\varphi_{q,t}(s_{q,t}(h)^{2n-1})=0,&\\
&\varphi_{q,t}(s_{q,t}(h)^{2n})=\|h\|_\H^{2n} \sum_{\mathscr V\in \mathscr P_2(2n)}  q^{\cross(\mathscr V)}\,t^{\nest(\mathscr V)},&
\end{align}
where $\mathscr P_{2}(2n)$ is again the collection of pair partitions of $[2n]$ and each $\mathscr V\in \mathscr P_{2}(2n)$ is (uniquely) written as a collection of pairs $\{(w_1,z_1),\ldots,(w_n,z_n)\}$ with $w_1<\ldots<w_n$ and $w_i<z_i$.

\section{The Extended Non-commutative Central Limit Theorem}
\label{sec-CLT1}
The goal of this section is to extend the ``deterministic formulation'' of the non-commutative Central Limit Theorem of Speicher \cite{Speicher1992}. The deterministic result differs from the previously stated Theorem~\ref{thm-CLT-Sp} in that the sequence of commutation signs $(s(i,j))_{i,j}$,  taking values in $\{-1,1\}$ and associated with the commutation relations  $b_i^\epsilon b_j^{\epsilon'}=s(j,i) b_j^{\epsilon'}b_i^\epsilon$, is now \emph{fixed}. In \cite{Speicher1992}, an analogous Wick-type formula is nevertheless shown to exist, provided the existence of the following limit:
$$\lim_{N\to\infty}\,\,\,\frac{1}{N^{2n}}\!\! \sum_{\substack{i(1),\ldots,i(2n)\in[N]\text{ s.t. }\\(i(1),\ldots,i(2n))\sim\mathscr V}}\prod_{(w_j,w_k,z_j,z_k)\in \text{Cross}(\mathscr V)} s(i(w_j),i(w_k))\quad:=\lambda_{\mathscr V}$$
for each pair partition $\mathscr V\in\mathscr P(2n)$.

At present, the focus is on a sequence  $(b_i)_{i\in\mathbb N}$ of non-commutative random variables satisfying a more general type of commutation relations, where for all $i\neq j$ and $\epsilon,\epsilon'\in\{1,\ast\}$,
\begin{equation}b_i^{\epsilon}b_j^{\epsilon'}\,\,=\,\,\mu_{\epsilon',\epsilon}(j,i)\,b_j^{\epsilon'}b_i^{\epsilon}\quad\quad\text{ for some }\,\,\mu_{\epsilon',\epsilon}(i,j)\in \mathbb R.\label{comrelB}\end{equation}
At the outset, the sequence of commutation coefficients $\{\mu_{\epsilon,\epsilon'}(i,j)\}_{i\neq j,\epsilon,\epsilon'\in\{1,\ast\}}$ 
must satisfy certain properties. In particular, interchanging the roles of $i$ and $j$ in the commutation relation implies that
\begin{equation}\mu_{\epsilon,\epsilon'}(i,j)=\frac{1}{\mu_{\epsilon',\epsilon}(j,i)}.\tag{A}\label{tagA}\end{equation}
Similarly, conjugating (via the $\ast$ operator) both sides of the commutation relation yields
$$\mu_{\ast,\ast}(i,j)=\mu_{1,1}(j,i),\quad \mu_{1,\ast}(i,j)=\mu_{1,\ast}(j,i),\quad \mu_{\ast,1}(i,j)=\mu_{\ast,1}(j,i).$$
(For example, $b_ib_j=\mu_{1,1}(j,i)b_jb_i$ and therefore $b_j^\ast b_i^\ast=(b_ib_j)^\ast=\mu_{1,1}(j,i)b_i^\ast b_j^\ast$, but also $b_j^\ast b_i^\ast=\mu_{\ast,\ast}(i,j) b_i^\ast b_j^\ast$.) Therefore, by (A),
\begin{equation*}\mu_{\ast,\ast}(i,j)=\frac{1}{\mu_{1,1}(i,j)},\quad\quad \mu_{\ast,1}(i,j)=\frac{1}{\mu_{1,\ast}(i,j)}.\tag{B}\end{equation*}
 
The second key ingredient in a non-commutative CLT is a moment-factoring assumption. As in \cite{Speicher1992}, the factoring is assumed to follow the underlying partition structure. Drawing on the notation of Section~\ref{preliminaries}, viz. the equivalence relation ``$\sim$'' on the set $[N]^r$ of $r$-tuples in $[N]:=\{1,\ldots,N\}$, the two relevant ways in which the moments may be assumed to factor are defined as follows. 

\begin{defn} Consider a sequence $\{b_i\}_{i\in\mathbb N}$ of random variables, elements of some non-commutative probability space $(\A,\varphi)$. The element $b_{i(1)}^{\epsilon(1)}\ldots b_{i(n)}^{\epsilon(n)}$, for $\epsilon(1),\ldots,\epsilon(n)\in\{1,\ast\}$ and $i(1),\ldots,i(n)\in\mathbb N$, is said to be an \emph{interval-ordered product} if $(i(1),\ldots,i(n))\sim \mathscr V$ where $\mathscr V=\{\{1,\ldots,k_1\},\{k_1+1,\ldots,k_2\},\ldots,\{k_{|\mathscr V|-1}+1\ldots,k_{|\mathscr V|}\}\}$ is an interval partition of $[n]$. The same element is said to be a \emph{naturally ordered product} if, in addition, $i(1)< i(k_1+1)\ldots< i(k_{|\mathscr V|-1}+1)$.

The state $\varphi$ is said to \emph{factor over naturally (resp. interval) ordered products} in $\{b_i\}_{i\in\mathbb N}$ if
$$\varphi(b_{i(1)}^{\epsilon(1)}\ldots b_{i(n)}^{\epsilon(n)})=\varphi(b_{i(1)}^{\epsilon(1)}\ldots b_{i(k_1)}^{\epsilon(k_i)})\ldots \varphi(b_{i(k_{|\mathscr V|-1}+1)}^{\epsilon(k_{|\mathscr V|-1}+1)}\ldots b_{i(k_{|\mathscr V|})}^{\epsilon(k_{|\mathscr V|})})$$
whenever $b_{i(1)}^{\epsilon(1)}\ldots b_{i(n)}^{\epsilon(n)}$ is a naturally (resp. interval) ordered product.
\label{natordprod}
\end{defn}

The following remark ensures that the commutation relations \eqref{comrelB} are consistent with the moment factoring assumptions.

\begin{remark} In assuming $\varphi$ factors over naturally ordered products, one must be able to bring a moment $\varphi(b_i^{\epsilon_i}b_i^{\epsilon_i'}b_j^{\epsilon_j}b_j^{\epsilon_j'})$ for $i>j$ into naturally-ordered form. Alternatively, should it be further assumed that $\varphi$ factors over interval-ordered products of the sequence $\{b_i\}_{i\in\mathbb N}$, one must allow that $\varphi(b_i^{\epsilon_i}b_i^{\epsilon_i'}b_j^{\epsilon_j}b_j^{\epsilon_j'})=\varphi(b_j^{\epsilon_j}b_j^{\epsilon_j'}b_i^{\epsilon_i}b_i^{\epsilon_i'})$ for all $i,j$ and $\epsilon,\epsilon'\in\{1,\ast\}$. When commutation coefficients are constrained to take values in $\{-1,1\}$, it is in fact the case that $b_i^{\epsilon_i}b_i^{\epsilon_i'}$ commutes with $b_j^{\epsilon_j}b_j^{\epsilon_j'}$, and the moment-factoring assumptions are consistent with the commutativity structure. However, this need not be the case for the general setting. In particular, 
$$\varphi(b_i^{\epsilon_i}b_i^{\epsilon_i'}b_j^{\epsilon_j}b_j^{\epsilon_j'})=\mu_{\epsilon_i,\epsilon_j'}(j,i)\mu_{\epsilon_i,\epsilon_j}(j,i)\mu_{\epsilon_i',\epsilon_j'}(j,i)\mu_{\epsilon_i',\epsilon_j}(j,i)\,\varphi(b_j^{\epsilon_j}b_j^{\epsilon_j'}b_i^{\epsilon_i}b_i^{\epsilon_i'}).$$
The reader may verify that any sequence of real-valued commutation coefficients for which the above product evaluates to unity  regardless of the choice of $\epsilon,\epsilon'$ must in fact take values in $\{-1,1\}$. 

Instead, rather than imposing additional restrictions on the sign sequence, the alternative approach is that of restricting the range of $\varphi$ when applied to the sequence $\{b_i\}$. In particular, by (A)--(B), 
$$\mu_{\epsilon_i,\epsilon_j'}(j,i)\mu_{\epsilon_i,\epsilon_j}(j,i)\mu_{\epsilon_i',\epsilon_j'}(j,i)\mu_{\epsilon_i',\epsilon_j}(j,i)=1$$
whenever $\epsilon\neq \epsilon'$. Thus, by imposing that $\varphi(b_i^\ast b_i^\ast)=\varphi(b_i b_i)=0$ for all $i\in\mathbb N$, the assumption on the factoring of naturally-ordered second moments conveniently becomes equivalent to factoring of interval-ordered second moments. Note that factoring an interval-ordered product containing higher moments generally still incurs a product of commutation coefficients. However, as will become apparent shortly, the contribution of such expressions vanishes in the limits of interest.\label{orderingRemark}
\end{remark}

The stage is now set for the main result of this section.

\begin{thm}[Extended Non-commutative CLT]
Consider a non-commutative probability space $(\A,\varphi)$ and a sequence $\{b_i\}_{i\in\mathbb N}$ of elements of $\A$ satisfying Condition~\ref{assump-CLT}, with the real-valued commutation coefficients $\{\mu_{\epsilon',\epsilon}(i,j)\}$ satisfying the consistency conditions (A)--(B). For $n\in\mathbb N$, fix $\epsilon(1),\ldots,\epsilon(2n)\in\{1,\ast\}$ and, letting $\mathscr P_2(2n)$ denote the collection of pair partitions of $[2n]$, assume that for all $\mathscr V=\{(w_1,z_1),\ldots,(w_n,z_n)\}\in \mathscr P_2(2n)$ the following limit exists:
\begin{eqnarray}\lambda_{\mathscr V,\epsilon(1),\ldots,\epsilon(2n)}&:=&\lim_{N\to\infty}\,\,\,N^{-n}\!\! \sum_{\substack{i(1),\ldots,i(2n)\in[N]\text{ s.t. }\\(i(1),\ldots,i(2n))\sim\mathscr V}}\left(\prod_{\substack{(w_j,w_k,z_j,z_k)\\\in\, \text{Cross}(\mathscr V)}} \mu_{\epsilon(z_j),\epsilon(w_k)}(i(z_j),i(w_k))\right.\nonumber\\&&\hspace{2cm} \left.\prod_{\substack{(w_j,w_m,z_m,z_j)\\\in\, \text{Nest}(\mathscr V)}} \mu_{\epsilon(z_j),\epsilon(z_m)}(i(z_j),i(z_m))\mu_{\epsilon(z_j),\epsilon(w_m)}(i(z_j),i(w_m))\right)\,,\label{eqLimit}\end{eqnarray}
where $\text{Cross}(\mathscr V)$ and $\text{Nest}(\mathscr V)$ denote, respectively, the sets of crossings and nestings in $\mathscr V$ (cf. Definition~\ref{defCrossNest}) and where the equivalence relation $\sim$ is given by (\ref{eqSim}).

\vspace{10pt}
Then, for every $n\in\mathbb N$ and all $\epsilon(1),\ldots,\epsilon(2n)\in \{1,\ast\}$,
\begin{align}&\lim_{N\to\infty}\varphi(S_N^{\epsilon(1)}\ldots S_N^{\epsilon(2n-1)})=0,\label{eqLimit1}&\\
&\lim_{N\to\infty}\varphi(S_N^{\epsilon(1)}\ldots S_N^{\epsilon(2n)})=\sum_{\mathscr V\in \mathscr P_2(2n)}\lambda_{\mathscr V,\epsilon(1),\ldots,\epsilon(2n)}\prod_{i=1}^n\varphi(b^{\epsilon(w_i)}b^{\epsilon(z_i)}),\label{eqLimit2}&\end{align}
for $S_N\in\A$ as given in \eqref{eq-SN} and with each $\mathscr V\in \mathscr P_2(2n)$ written as  $\mathscr V=\{(w_1,z_1),\ldots,(w_n,z_n)\}$ for $w_1<\ldots<w_n$ and $w_i<z_i$ $(i=1,\ldots,n)$.
\label{CLT}
\end{thm}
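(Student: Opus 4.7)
The plan is to expand
\[
\varphi\bigl(S_N^{\epsilon(1)}\cdots S_N^{\epsilon(m)}\bigr)=N^{-m/2}\sum_{i(1),\ldots,i(m)\in[N]}\varphi\bigl(b_{i(1)}^{\epsilon(1)}\cdots b_{i(m)}^{\epsilon(m)}\bigr)
\]
and partition the inner sum according to the equivalence class $\mathscr V\in\mathscr P(m)$ of $(i(1),\ldots,i(m))$ under the relation $\sim$ of \eqref{eqSim}. Since $|\{\vec i\in[N]^m:\vec i\sim\mathscr V\}|=N(N-1)\cdots(N-|\mathscr V|+1)=\Theta(N^{|\mathscr V|})$ and mixed moments are uniformly bounded by Condition~\ref{assump-CLT}(3), the contribution of class $\mathscr V$ to the rescaled sum is $O(N^{|\mathscr V|-m/2})$. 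For $m=2n-1$ every $\mathscr V$ contains at least one singleton block; for $m=2n$ the only no-singleton classes with $|\mathscr V|=n$ are the pair partitions in $\mathscr P_2(2n)$, while no-singleton classes with $|\mathscr V|<n$ are suppressed by $N^{|\mathscr V|-n}\to 0$. The proof therefore reduces to (i) showing that moments indexed by singleton-containing classes vanish exactly, and (ii) evaluating the moment on each pair partition.

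For (i), suppose $\mathscr V$ has a singleton block $\{p\}$, so $i(p)$ is unique in the tuple. Using \eqref{comrelassump} together with the consistency relations (A)--(B), I would permute the factors of $b_{i(1)}^{\epsilon(1)}\cdots b_{i(m)}^{\epsilon(m)}$ into the order of ascending index (keeping equal-index factors in their original relative order), accumulating a scalar that is a product of $\mu$-coefficients indexed by the inversions of the permutation. The resulting product is naturally ordered in the sense of Definition~\ref{natordprod}, and since $i(p)$ is unique the factor $b_{i(p)}^{\epsilon(p)}$ appears as a single-element block. Condition~\ref{assump-CLT}(4) then factors $\varphi$ across blocks, producing the factor $\varphi(b_{i(p)}^{\epsilon(p)})=0$ from Condition~\ref{assump-CLT}(1), so the whole moment vanishes. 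This disposes of \eqref{eqLimit1} entirely and reduces \eqref{eqLimit2} to a sum over $\mathscr P_2(2n)$.

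For (ii), fix $\mathscr V=\{(w_1,z_1),\ldots,(w_n,z_n)\}\in\mathscr P_2(2n)$ and $\vec i\sim\mathscr V$. Using commutations I would rearrange the product into the \emph{standard form} $\prod_{k=1}^n b_{i(w_k)}^{\epsilon(w_k)}b_{i(z_k)}^{\epsilon(z_k)}$, in which pair $k$ is adjacent at positions $2k-1,2k$. By consistency of the commutation relations the overall scalar is well defined and equals $\prod_{(a,b)\text{ inv}}\mu_{\epsilon(b),\epsilon(a)}(i(b),i(a))$, where $(a,b)$ is an inversion if $a<b$ originally but the element at $a$ ends up to the right of the element at $b$ in the standard form. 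A case analysis on whether $a$ and $b$ are openers or closers shows that only two configurations produce inversions: each crossing $(w_j,w_k,z_j,z_k)\in\Cross(\mathscr V)$ contributes the single inversion $(w_k,z_j)$, with scalar $\mu_{\epsilon(z_j),\epsilon(w_k)}(i(z_j),i(w_k))$, while each nesting $(w_j,w_m,z_m,z_j)\in\Nest(\mathscr V)$ contributes the two inversions $(w_m,z_j)$ and $(z_m,z_j)$, with scalars $\mu_{\epsilon(z_j),\epsilon(w_m)}(i(z_j),i(w_m))$ and $\mu_{\epsilon(z_j),\epsilon(z_m)}(i(z_j),i(z_m))$. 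This reproduces exactly the $\mu$-product appearing inside $\lambda_{\mathscr V,\epsilon(1),\ldots,\epsilon(2n)}$.

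The delicate step---and where I expect the main obstacle---is factoring the standard form into the pair moments, since the indices $i(w_1),\ldots,i(w_n)$ need not be increasing and Condition~\ref{assump-CLT}(4) only supplies \emph{naturally}-ordered factoring. Here I would invoke Remark~\ref{orderingRemark}: Condition~\ref{assump-CLT}(1) leaves $\varphi(b_ib_i^\ast)$ as the unique non-vanishing second moment of a single $b_i$, so the target product $\prod_k\varphi(b^{\epsilon(w_k)}b^{\epsilon(z_k)})$ is already zero unless every pair satisfies $(\epsilon(w_k),\epsilon(z_k))=(1,\ast)$. In the nonzero case each pair-block has $\epsilon\neq\epsilon'$, so by (A)--(B) the four-$\mu$ product attached to transposing two such pair-blocks collapses to $1$; the pair-blocks thus commute inside $\varphi$ and can be freely reordered into a naturally ordered product to which Condition~\ref{assump-CLT}(4) applies, yielding exactly $\prod_k\varphi(b^{\epsilon(w_k)}b^{\epsilon(z_k)})$. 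In the zero case, reordering instead picks up a nontrivial scalar, but the resulting factored product contains a vanishing block moment so $\varphi$ of the standard form is again zero, agreeing with the vanishing right-hand side. Combining these ingredients, $\varphi\bigl(b_{i(1)}^{\epsilon(1)}\cdots b_{i(2n)}^{\epsilon(2n)}\bigr)$ equals the $\mu$-product identified above times $\prod_{k=1}^n\varphi\bigl(b^{\epsilon(w_k)}b^{\epsilon(z_k)}\bigr)$; summing over $\vec i\sim\mathscr V$, dividing by $N^n$, invoking the assumed existence of $\lambda_{\mathscr V,\epsilon(1),\ldots,\epsilon(2n)}$, and then summing over $\mathscr V\in\mathscr P_2(2n)$ yields \eqref{eqLimit2}.
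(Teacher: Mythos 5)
Your proposal is correct and follows essentially the same route as the paper's proof: decompose the moment over equivalence classes of index tuples, kill singleton-containing classes exactly via commutation plus naturally-ordered factoring, suppress classes with fewer than $m/2$ blocks by the $O(N^{|\mathscr V|-m/2})$ bound from the uniform moment bounds, identify the accumulated scalar as one $\mu$-factor per crossing and two per nesting (exactly the paper's $\beta$-coefficient), and use $\varphi(b_ib_i)=\varphi(b_i^\ast b_i^\ast)=0$ together with Remark~\ref{orderingRemark} to factor the pair-adjacent product into second moments. One minor misstatement: for $m=2n-1$ it is not true that every class contains a singleton (odd-size blocks of size $\geq 3$ exist); those classes vanish not exactly but only in the limit, which your own bound already handles, since a singleton-free partition of an odd set has at most $(m-1)/2<m/2$ blocks.
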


{\it Proof of Theorem~\ref{CLT}.} The notation and the development follow closely those of \cite{Speicher1992}.

Fix  $r\in\mathbb N$ and $\epsilon(1),\ldots,\epsilon(r)\in\{1,\ast\}$ and recall that the focus is the $N\to\infty$ limit of the corresponding mixed moment of $S_N$. Namely, let
$$M_N:=\varphi(S_N^{\epsilon(1)}\ldots S_N^{\epsilon(r)})=\frac{1}{N^{r/2}}\sum_{i(1),\ldots,i(r)\in[N]}\varphi\left(b_{i(1)}^{\epsilon(1)}\ldots b_{i(r)}^{\epsilon(r)}\right).$$
Making use of the previously-defined equivalence relation, $M_N$ can be rewritten as
$$M_N=\sum_{\mathscr V\in\mathscr P(r)} \frac{1}{N^{r/2}}\sum_{\substack{i(1),\ldots,i(r)\in[N]\text{ s.t. }\\(i(1),\ldots,i(r))\sim\mathscr V}}\varphi\left(b_{i(1)}^{\epsilon(1)}\ldots b_{i(r)}^{\epsilon(r)}\right)=\sum_{\mathscr V\in\mathscr P(r)} \frac{1}{N^{r/2}}\,\,M_N^{\mathscr V},$$
where
$$M_N^{\mathscr V}:=\sum_{\substack{i(1),\ldots,i(r)\in[N]\text{ s.t. }\\(i(1),\ldots,i(r))\sim\mathscr V}}\varphi\left(b_{i(1)}^{\epsilon(1)}\ldots b_{i(r)}^{\epsilon(r)}\right).$$
Focusing on $M_N^{\mathscr V}$, suppose first that $\mathscr V$ contains a singleton, i.e. a single-element part $\{k\}\in\mathscr V$ for some $k\in [r]$. Via the commutation relation (\ref{comrelassump}), $b_{i(1)}^{\epsilon(1)}\ldots b_{i(r)}^{\epsilon(r)}$ can be brought into a naturally ordered form (incurring, in the process, a multiplying factor given by the corresponding product of the commutation coefficients). In turn, by the assumption on the factoring of the naturally ordered products (cf. Definition~\ref{natordprod}), $\varphi\left(b_{i(1)}^{\epsilon(1)}\ldots b_{i(r)}^{\epsilon(r)}\right)$ factors according to the blocks in $\mathscr V$. Since $\varphi (b_k)=\varphi (b_k^\ast)=0$, it follows that for all $N\in\mathbb N$, $M_N^{\mathscr V}=0$ for all partitions $\mathscr V$ containing a singleton block.

Focus next on partitions of $[r]$ containing blocks with two or more elements or, equivalently, partitions $\mathscr V\in \mathscr P(r)$ with $|\mathscr V|\leq \lfloor r/2 \rfloor $, where $|\mathscr V|$ denotes the number of blocks in $\mathscr V$. Recalling that, by the assumption on the existence of uniform bounds on the moments, we have that for all $\mathscr V\in \mathscr P(r)$,
$$\left|\varphi\left(b_{i(1)}^{\epsilon(1)}\ldots b_{i(r)}^{\epsilon(r)}\right)\right|\leq \alpha_{\mathscr V}$$
for some $\alpha_{\mathscr V}\in\mathbb R$. Thus, for a partition $\mathscr V$ with $\ell$ blocks, summing over all $i(1),\ldots,i(r)\in[N]$ with $(i(1),\ldots,i(r))\sim\mathscr V$ yields
$$|M_N^{\mathscr V}|\leq {N\choose \ell}\,\ell!\,\, \alpha_{\mathscr V},$$
and therefore
$$|M_N|\leq \sum_{\mathscr V\in\mathscr P(r)} \frac{{N\choose \ell}\,\ell!}{N^{r/2}}\, \alpha_{\mathscr V}.$$
Noting that (1) the above sum is taken over a fixed (finite) index $r$, (2) that the only $N$-dependent term in the above expression is the ratio ${N\choose \ell}/{N^{r/2}}$ and (3) that  ${N\choose \ell}/{N^{r/2}} \to 0$ as $N\to\infty$ for $\ell <\lceil r/2 \rceil$, it follows that only those partitions $\mathscr V$ with $|\mathscr V|\geq \lceil r/2 \rceil$ contribute to the $N\to\infty$ limit of $M_N$. But, since $|\mathscr V|\leq \lfloor r/2 \rfloor $, it follows that the only non-vanishing contributions are obtained for $r$ even and partitions with exactly $r/2$ blocks | i.e. pair-partitions, $\mathscr V\in \mathscr P_2(r)$. Therefore, for $r$ odd,
$$\lim_{N\to\infty}\varphi(S_N^{\epsilon(1)}\ldots S_N^{\epsilon(r)})=0$$
and, otherwise,
$$\lim_{N\to\infty}|M_N|= \sum_{\mathscr V\in\mathscr P_2(r)} \lim_{N\to\infty}N^{-\frac{r}{2}} \sum_{\substack{i(1),\ldots,i(r)\in[N]\text{ s.t. }\\(i(1),\ldots,i(r))\sim\mathscr V}}\varphi\left(b_{i(1)}^{\epsilon(1)}\ldots b_{i(r)}^{\epsilon(r)}\right).$$
Next, fixing $i(1),\ldots,i(r)\in[N]$ and recalling that $\mathscr V$ is a pair-partition of $[r]$, consider the following algorithm for transforming $b_{i(1)}^{\epsilon(1)}\ldots b_{i(r)}^{\epsilon(r)}$, via the commutation relation (\ref{comrelassump}), into an interval-ordered product. Starting with $i(1)$ and recalling that $\mathscr V$ is a pair-partition of $[r]$, let $1<k_1\leq r$ denote the unique index for which $i(k_1)=i(1)$. Consider element $b_{i(1)}$ to be already in place and commute $b_{i(k_1)}$ with the elements to its left until $b_{i(k_1)}$ is immediately to the right of $b_{i(1)}$, recording all the while the commutation coefficients incurred in each transposition. The next iteration, proceeding in the analogous manner, is carried out on the string of length $r-2$ given by $i(2),\ldots,i(\breve{k_1}),\ldots,i(r)$, where $i(\breve{k_1})$ indicates that $i(k_1)$ has been suppressed from the string. Continuing in this manner, the algorithm terminates when the remaining string is the empty string. The resulting moment is of the form
$$\varphi\left(b_{i(1)}^{\epsilon(1)}\ldots b_{i(r)}^{\epsilon(r)}\right)=\beta^{\epsilon(1),\ldots,\epsilon(r)}_{i(1),\ldots,i(r)}\,\,\varphi\left(b_{i(w_1)}^{\epsilon(w_{1})}b_{i(z_{1})}^{\epsilon(z_{1})}\ldots b_{i(w_{r/2})}^{\epsilon(w_{r/2})}b_{i(z_{r/2})}^{\epsilon(z_{r/2})}\right),$$
where $\mathscr V=\{(w_1,z_1),\ldots,(w_{r/2},z_{r/2})\}$ with $w_1<\ldots<w_{r/2}$ is the underlying pair-partition and $\beta^{\epsilon(1),\ldots,\epsilon(r)}_{i(1),\ldots,i(r)}$ denotes the product of the commutation coefficients incurred in this transformation. Note that though $i(w_j)=i(z_j)$ for all $j=1,\ldots,r/2$, in general $\epsilon(w_j)\neq \epsilon(z_j)$ and the above expression therefore also (artificially) distinguishes between $i(w_j)$ and $i(z_j)$. 

While it need not be the case that $i(w_1)<\ldots<i(w_{r/2})$, and the moment $$\varphi\left(b_{i(w_1)}^{\epsilon(w_{1})}b_{i(z_{1})}^{\epsilon(z_{1})}\ldots b_{i(w_{r/2})}^{\epsilon(w_{r/2})}b_{i(z_{r/2})}^{\epsilon(z_{r/2})}\right)$$ therefore need not be naturally ordered, $\varphi$ nevertheless factors over the pairs. Specifically, as $\varphi(b_jb_j)=\varphi(b_j^\ast b_j^\ast)=0$, it can be assumed that $\epsilon(w_j)\neq \epsilon(z_j)$ for $j=1,\ldots,r/2$. 
By Remark~\ref{orderingRemark}, it then follows that $$\varphi\left(b_{i(1)}^{\epsilon(1)}\ldots b_{i(r)}^{\epsilon(r)}\right)=\beta^{\epsilon(1),\ldots,\epsilon(r)}_{i(1),\ldots,i(r)}\,\,\varphi\left(b_{i(w_1)}^{\epsilon(w_1)}b_{i(z_1)}^{\epsilon(z_1)}\right)\ldots \varphi\left(b_{i(w_{r/2})}^{\epsilon(w_{r/2})}b_{i(z_{r/2})}^{\epsilon(w_{r/2})}\right).$$

Next, $\beta^{\epsilon(1),\ldots,\epsilon(r)}_{i(1),\ldots,i(r)}$ can be expressed combinatorially as follows. Fixing some $j \in [r/2]$ and considering the corresponding pair $(w_j,z_j)\in \mathscr V$ (where $w_j<z_j$), note that for every $k\in [r/2]$ for which $w_j<w_k<z_j<z_k$, the above algorithm commutes $z_j$ and $w_k$. Additionally note that this commutation is performed exactly once, on the $j^\text{th}$ iteration, as the process does not revisit pairs that were brought into the desired form in one of the previous steps. The corresponding contribution to $\beta^{\epsilon(1),\ldots,\epsilon(r)}_{i(1),\ldots,i(r)}$ is therefore given by $\mu_{\epsilon(z_j),\epsilon(w_k)}(i(z_j),i(w_k))$.
Similarly, for every $m\in [r/2]$ for which $w_j<w_m<z_m<z_j$, the above algorithm commutes $z_j$ and $z_m$ \underline{as well as} $z_j$ and $w_m$, and both commutations occur exactly once. The corresponding contribution to $\beta^{\epsilon(1),\ldots,\epsilon(r)}_{i(1),\ldots,i(r)}$ is therefore given by $\mu_{\epsilon(z_j),\epsilon(z_m)}(i(z_j),i(z_m))\mu_{\epsilon(z_j),\epsilon(w_m)}(i(z_j),i(w_m))$. Recall now (cf. Definition~\ref{defCrossNest}) that the $4$-tuple given by $w_j<w_k<z_j<z_k$ is what is referred to as a \emph{crossing} in $\mathscr V$  and encoded by $(w_j,w_k,z_j,z_k)\in\text{Cross}(\mathscr V)$, whereas the $4$-tuple $w_j<w_m<z_m<z_j$ is referred to as a \emph{nesting} in $\mathscr V$ and is encoded as $(w_j,w_m,z_m,z_j)\in \text{Nest}(\mathscr V)$. 
Finally, realizing that the algorithm performs no other commutations than the two types described, it follows that
\begin{eqnarray*}\beta^{\epsilon(1),\ldots,\epsilon(r)}_{i(1),\ldots,i(r)}&=& \prod_{(w_j,w_k,z_j,z_k)\in \text{Cross}(\mathscr V)} \mu_{\epsilon(z_j),\epsilon(w_k)}(i(z_j),i(w_k))\times\\
&&\prod_{(w_j,w_m,z_m,z_j)\in \text{Nest}(\mathscr V)} \mu_{\epsilon(z_j),\epsilon(z_m)}(i(z_j),i(z_m))\mu_{\epsilon(z_j),\epsilon(w_m)}(i(z_j),i(w_m)).\end{eqnarray*}
The encoding of $\beta^{\epsilon(1),\ldots,\epsilon(r)}_{i(1),\ldots,i(r)}$ through nestings and crossings of $\mathscr V$ is illustrated in Figures~\ref{figAlgo1} and \ref{figAlgo2}. 

Putting it all together,
\begin{equation}\lim_{N\to\infty}\varphi(S_N^{\epsilon(1)}\ldots S_N^{\epsilon(2n)})=\sum_{\mathscr V\in \mathscr P_2(2n)}\lim_{N\to\infty}N^{-\frac{r}{2}}\!\!\!\!\!\!\!\!\!\!\sum_{\substack{i(1),\ldots,i(r)\in[N]\text{ s.t. }\\(i(1),\ldots,i(r))\sim\mathscr V}}\!\!\!\!\!\varphi\left(b_{i(w_1)}^{\epsilon(w_1)}b_{i(z_1)}^{\epsilon(z_1)}\right)\ldots \varphi\left(b_{i(w_n)}^{\epsilon(w_n)}b_{i(z_n)}^{\epsilon(z_n)}\right)\beta^{\epsilon(1),\ldots,\epsilon(r)}_{i(1),\ldots,i(r)}\label{eqGeneral}\end{equation}
By the assumption on the covariances of the $b_i$'s and the existence of the limit in (\ref{eqLimit}),
$$\lim_{N\to\infty}\varphi(S_N^{\epsilon(1)}\ldots S_N^{\epsilon(2n)})=\sum_{\mathscr V\in \mathscr P_2(2n)}\varphi\left(b^{\epsilon(w_1)}b^{\epsilon(z_1)}\right)\ldots \varphi\left(b^{\epsilon(w_n)}b^{\epsilon(z_n)}\right)\lim_{N\to\infty}N^{-\frac{r}{2}}\!\!\!\!\!\!\!\!\!\!\sum_{\substack{i(1),\ldots,i(r)\in[N]\text{ s.t. }\\(i(1),\ldots,i(r))\sim\mathscr V}}\!\!\!\!\!\beta^{\epsilon(1),\ldots,\epsilon(r)}_{i(1),\ldots,i(r)},$$
which yields \eqref{eqLimit2} and completes the proof. \hfill$\qed$

\begin{figure}\centering
\includegraphics[scale=0.4]{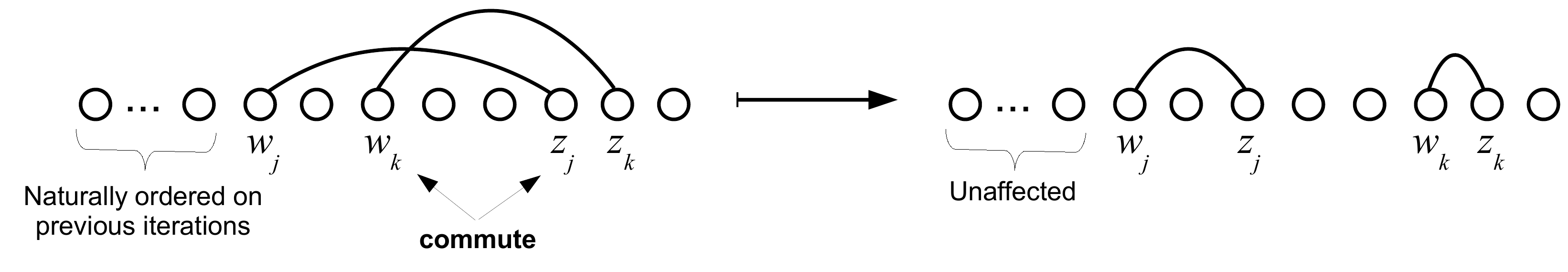}
\caption{The process of bringing a mixed moment into a naturally-ordered form involves
commuting all the inversions and all the nestings in each of the underlying pair partitions. In commuting a crossing $(w_j,w_k,z_j,z_k)$, as depicted, the corresponding moment incurs a factor $\mu_{\epsilon(z_j),\epsilon(w_k)}(i(z_j),i(w_k))$.}
\label{figAlgo1}
\end{figure}

\begin{figure}\centering
\includegraphics[scale=0.4]{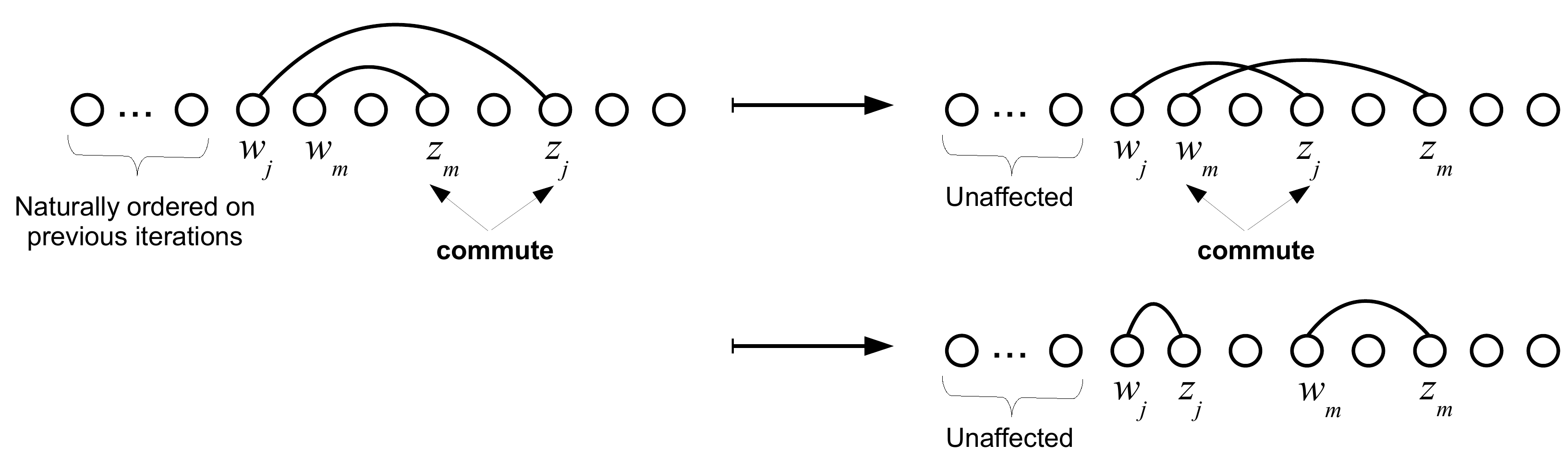}
\caption{The process of bringing a mixed moment into a naturally-ordered form involves
commuting all the inversions and all the nestings in each of the underlying pair partitions. In commuting a nesting $(w_j,w_m,z_m,z_j)$, as depicted, the corresponding moment incurs a factor $\mu_{\epsilon(z_j),\epsilon(z_m)}(i(z_j),i(z_m))\mu_{\epsilon(z_j),\epsilon(w_m)}(i(z_j),i(w_m))$.}
\label{figAlgo2}
\end{figure}

\begin{remark}
The assumption of Theorem~\ref{CLT} that the covariances of the $b_i$'s are independent of $i$, namely, that $\varphi(b_i^{\epsilon_1}b_i^{\epsilon_2}) = \varphi(b_j^{\epsilon_1}b_j^{\epsilon_2})$ for all $i,j\in\mathbb N$ and $\epsilon_1,\epsilon_2\in\{1,\ast\}$, was not used in obtaining (\ref{eqLimit1}) and (\ref{eqGeneral}). Provided the existence of the limit in (\ref{eqGeneral}), the additional assumption is solely used for the purpose of simplifying (\ref{eqGeneral}) as (\ref{eqLimit2}). 
\end{remark}

The above Theorem~\ref{CLT} differs from Theorem~1 of \cite{Speicher1992} in the following ways:
\begin{enumerate}
\item The more general commutation relation $b_i^{\epsilon}b_j^{\epsilon'}=\mu_{\epsilon',\epsilon}(j,i)\,b_j^{\epsilon'}b_i^{\epsilon}$ with $\mu_{\epsilon,\epsilon'}(i,j)\in \mathbb R$ now replaces the commutation relation $b_i^{\epsilon}b_j^{\epsilon'}=s(i,j) b_j^{\epsilon'}b_i^{\epsilon}$ with spins $s(i,j)\in \{-1,1\}$.
\item For the purpose of factoring naturally ordered second moments as interval-ordered second moments, it is presently additionally assumed that $\varphi(b_i^\ast b_i^\ast)=\varphi(b_ib_i)=0$. (cf. Remark~\ref{orderingRemark}.)
\item The convergence of the moments now hinges on the existence of a more complicated limit, which is not only a function of the commutation coefficients and of the underlying partition, as was the case in \cite{Speicher1992}, but also on the pattern of adjoints in the mixed moment of interest (i.e. on the string $\epsilon(1),\ldots,\epsilon(n)$).
\end{enumerate}
Note that the assumption on the uniform bounds on the moments is not new, but is instead implicit in \cite{Speicher1992}.

\section{Stochastic Interpolation}
\label{sec-CLT2}
Recall that, analogously to Theorem~1 in \cite{Speicher1992}, the ``deterministic version'' of the non-commutative CLT hinges on an existence of the limit (\ref{eqLimit}), which is determined by the sequence of commutation coefficients $\{\mu_{\epsilon,\epsilon'}(i,j)\}$. Rather than providing more explicit conditions for the existence of the above limit, this section follows the philosophy of \cite{Speicher1992} and instead considers the scenario where the coefficients ``may have been chosen at random''. The outcome will be that, starting with a probability law for a single coefficient and extending it to a product measure on the entire coefficient sequence, almost any choice of commutation coefficients will yield a finite and easily describable limit. For this, it is first necessary to define a suitable product measure on the coefficient sequence that is consistent with the dependency structure given by (A)--(B), which is accomplished in Remark~\ref{relationRemark}. In turn, Remark~\ref{limitRemark} considers the effect on the limit achieved by imposing the vanishing of certain second moments. Finally, Lemma~\ref{lemRandom} is the remaining ingredient in the ``almost sure version" of the non-commutative CLT (viz. the present Theorem~\ref{CLT1}).

\begin{remark} Defining a measure on the sequence of commutation coefficients by focusing on the triangular sequences $\{\mu_{\ast,\ast}(i,j)\}_{1\leq i< j}$ and attempting to fix the remaining coefficients via (A)--(B) still leaves one degree of freedom. Namely, $\mu_{\ast,\ast}(i,j)$ was not until now explicitly related to $\mu_{\ast,1}(i,j)$. The need for a third relation governing the sign sequence comes into play when considering positivity requirements. Generally, $\varphi$ is assumed to be \emph{positive}, that is, if $\varphi(aa^\ast)\geq 0$ for all $a\in\mathscr A$. Then, $\varphi(b_ib_i^\ast)\geq 0$ and $\varphi(b_ib_jb_j^\ast b_i^\ast)\geq 0$ for all $i,j\in \mathbb N$. But, by the commutation relations and the factoring of naturally ordered moments, 
$$\varphi(b_ib_jb_j^\ast b_i^\ast)= \mu_{\ast,1}(i,j)\mu_{\ast,\ast}(i,j)\varphi(b_ib_i^\ast)\varphi(b_jb_j^\ast).$$
If the sequence $b_1,b_2,\ldots$ is such that $\varphi(b_ib_i^\ast)>0$ for all $i$, the commutation signs must therefore also satisfy the following, third, requirement:
\begin{equation*}\frac{\mu_{\ast,1}(i,j)}{\mid\mu_{\ast,1}(i,j)\mid}=\frac{\mu_{\ast,\ast}(i,j)}{\mid\mu_{\ast,\ast}(i,j)\mid}.\tag{C}\end{equation*}
In the random setting, (C) translates to $\mu_{\ast,1}(i,j)=\gamma(i,j)\mu_{\ast,\ast}(i,j)$ for some random sequence $\{\gamma(i,j)\}$ supported on $(0,\infty)$. 

In assuming $\{\gamma(i,j)\}$ to be independent of $\mu_{\ast,\ast}(i,j)$ in line with the general philosophy of this section, the reader may soon verify that only the expectation of $\gamma(i,j)$ will matter from the perspective of Lemma~\ref{lemRandom}. Furthermore, since the expectations of $\mu_{\ast,\ast}(i,j)$ and $\mu_{\ast,1}(i,j)$ will be taken to not depend on the index $(i,j)$, one is free to set $t:=\E(\gamma(i,j))$. Then, for $i<j$, (C) becomes:
\begin{equation*}\mu_{\ast,1}(i,j)=t\mu_{\ast,\ast}(i,j),\quad t>0.\tag{C'}\end{equation*}\label{relationRemark}
\end{remark}

\begin{remark} Beyond the existence of the limit (\ref{eqLimit}), the goal of the present section is to develop a probabilistic framework in which this limit takes on a particularly natural form. For this purpose, the basic setting of Theorem~\ref{CLT} will need to fulfill an additional requirement. Specifically, by the assumption of factoring of naturally-ordered moments, 
$\varphi(b_ib_jb_i^\ast b_j^\ast)$ and $\varphi(b_ib_j^\ast b_i^\ast b_j)$ for $i<j$ are both brought into their naturally ordered form by performing a single commutation. In the former case, the commutation incurs a factor $\mu_{\ast,1}(i,j)$ and, in the latter, the factor $\mu_{\ast,\ast}(i,j)$. Yet, in the combinatorial formulation, both products are in the equivalence class (in the sense of ``$\sim$") of the pair partition $\pi=$\includegraphics[scale=0.2]{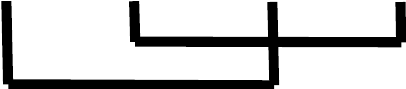} and both are brought into their naturally ordered form by commuting the single crossing in $\pi$.  Thus, in order for the combinatorial invariant to be preserved, either: 
\begin{itemize}
\item the expected values of $\mu_{\ast,1}(i,j)$ and $\mu_{\ast,\ast}(i,j)$ must be the same,\\
\begin{centering}or,\end{centering}
\item one of the two mixed moments vanishes, i.e. $\varphi(b_ib_i^\ast)=0$ or $\varphi(b_i^\ast b_i)=0$ for all $i\in\mathbb N$.
\end{itemize}
By Remark~\ref{relationRemark}, one may without loss of generality let $\mu_{\ast,1}(i,j)=t\,\mu_{\ast,\ast}(i,j)$. Thus, as the reader may soon be able to verify, opting to make equal the means of $\mu_{\ast,1}(i,j)$ and $\mu_{\ast,\ast}(i,j)$ by letting $t=1$ reduces the statistics of the desired limit to those of crossings and the outcome is the same as in the case of randomly chosen commutation signs in \cite{Speicher1992}. The formulation of Lemma~\ref{lemRandom} instead opts for the second alternative, and the introduction of the second parameter $t$ will give rise to the appearance of a second combinatorial statistic, that of \emph{nestings}.

Note that while $\varphi$ is assumed to be positive, it is not assumed to be faithful, and there is no contradiction in assuming that $\varphi(b_i^\ast b_i)=0$ while  $\varphi(b_i b_i^\ast)\neq 0$. As further discussed in the following section, letting $\varphi(b_i^\ast b_i)=0$ and $\varphi(b_i b_i^\ast)=1$ will provide an asymptotic model for a family of ``twisted" annihilation operators, whereas making the opposite choice would yield the corresponding analogue for the creation operators.\label{limitRemark}
\end{remark}

\begin{lem} Fix $0\leq |q|<t$ and let $\{\mu(i,j)\}_{1\leq i< j}$ be a collection of independent, identically distributed non-vanishing random variables, with
$$\E(\mu(i,j))=qt^{-1}\in\mathbb R,\quad\quad \E(\mu(i,j)^2)=1.$$
Letting $\mu_{\ast,\ast}(i,j)=\mu(i,j)$ for $1\leq i<j$, populate the remaining $\mu_{\epsilon,\epsilon'}(i,j)$ for $\epsilon,\epsilon'\in\{1,\ast\}$ by
\begin{align*}&\mu_{1,1}(i,j)=\frac{1}{\mu_{\ast,\ast}(i,j)},&\mu_{1,\ast}(i,j)=\frac{1}{\mu_{\ast,1}(i,j)},&\\ 
&\mu_{\ast,1}(i,j)=t\,\mu_{\ast,\ast}(i,j),&\mu_{\epsilon',\epsilon}(j,i)=\frac{1}{\mu_{\epsilon,\epsilon'}(i,j)}.&\end{align*} 

Then, for any $\mathscr V\in \mathscr P_2(2n)$ and $\epsilon(1),\ldots,\epsilon(2n)\in\{1,\ast\}$, the limit (\ref{eqLimit}) exists a.s. 
Moreover, if $\mathscr V$ is such as to satisfy $(\epsilon(w),\epsilon(z))=(1,\ast)$ for all blocks $(w,z)\in \mathscr V$, the corresponding limit is given by
$$\lambda_{\mathscr V,\epsilon(1),\ldots,\epsilon(2n)}=q^{\cross(\mathscr V)}\,t^{\nest(\mathscr V)}\quad \text{a.s.}$$
where $\cross(\mathscr V)=|\Cross(\mathscr V)|$ and $\nest(\mathscr V)=|\Nest(\mathscr V)|$ denote, respectively, the numbers of crossings and nestings in $\mathscr V$ (cf. Definition~\ref{defCrossNest}).\label{lemRandom}
\end{lem}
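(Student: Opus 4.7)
\emph{Proof plan.} The approach has three steps: (i) for each admissible multi-index assignment to $\mathscr V$, write the summand in \eqref{eqLimit} as a product of \emph{independent} factors, one per crossing and one per nesting of $\mathscr V$; (ii) under the hypothesis $(\epsilon(w),\epsilon(z))=(1,\ast)$ on every block, compute the expectation of each factor to recover $q^{\cross(\mathscr V)}t^{\nest(\mathscr V)}$; (iii) upgrade convergence of the mean to almost-sure convergence by a second-moment / Borel--Cantelli argument. The same variance estimate in (iii) also secures a.s.\ existence of the limit for arbitrary $\epsilon(1),\ldots,\epsilon(2n)$.

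Fix $\mathscr V=\{(w_1,z_1),\ldots,(w_n,z_n)\}$ and, for each multi-index with $(i(1),\ldots,i(2n))\sim\mathscr V$, let $I_1,\ldots,I_n\in[N]$ denote the $n$ distinct values assigned to the blocks. Under $(\epsilon(w_j),\epsilon(z_j))=(1,\ast)$, a crossing of blocks $a,b$ contributes a single factor $\mu_{\ast,1}(I_a,I_b)$, while a nesting of blocks $a,b$ (with $a$ outer) contributes $\mu_{\ast,\ast}(I_a,I_b)\mu_{\ast,1}(I_a,I_b)$. By the prescriptions \eqref{prescription1}--\eqref{prescription2}, $\mu_{\ast,1}$ is symmetric in its arguments and equals $t\,\mu(I_a\wedge I_b,\,I_a\vee I_b)$, whereas $\mu_{\ast,\ast}(I_a,I_b)$ equals $\mu(I_a\wedge I_b,\,I_a\vee I_b)$ or its reciprocal according to whether $I_a<I_b$ or $I_a>I_b$. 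The decisive point is that distinct pairs of blocks yield distinct \emph{unordered} pairs of indices, so each block-pair factor depends on a single member of the independent family $\{\mu(p,q)\}_{p<q}$, and hence the crossing/nesting factors are jointly independent. Their expectations compute as follows: the crossing factor has mean $t\cdot qt^{-1}=q$, and the nesting factor equals either $t\mu^2$ or $t$ (depending on the sign of $I_a-I_b$), both of which have mean $t$ by $\E[\mu^2]=1$. Multiplying across the $\cross(\mathscr V)+\nest(\mathscr V)$ contributing block-pairs gives $\E[\text{summand}]=q^{\cross(\mathscr V)}t^{\nest(\mathscr V)}$ uniformly in the admissible assignment, and since there are $N(N-1)\cdots(N-n+1)\sim N^n$ of them, the mean of the normalized prelimit converges to the stated value.

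The main obstacle is step (iii). Expanding the second moment of the normalized prelimit as a double sum over pairs of assignments $(I,J)$, the $N^{2n}(1+o(1))$ ``generic'' pairs with $\{I_k\}\cap\{J_\ell\}=\emptyset$ factor into two independent products and contribute $(q^{\cross(\mathscr V)}t^{\nest(\mathscr V)})^2$ each in expectation, while the ``collision'' pairs -- those sharing at least one index -- number only $O(N^{2n-1})$ and each contribute a bounded expectation, provided $\mu$ has sufficient integrability (a mild implicit strengthening of the stated hypothesis, needed because shared $\mu(p,q)$-variables may appear with higher powers in the overlap region). This yields a variance of order $O(N^{-1})$ for the normalized prelimit; applying Chebyshev along the subsequence $N_k=k^2$ and invoking Borel--Cantelli gives a.s.\ convergence along $N_k$, and a uniform bound on the prelimit's fluctuations between consecutive subsequence points extends the convergence to all $N$. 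The subtle part is the combinatorial classification of overlap patterns between $I$ and $J$ so that the collision contributions indeed collectively remain $O(N^{2n-1})$; this bookkeeping closely parallels that in Lemma~1 of \cite{Speicher1992}.
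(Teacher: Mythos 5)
Your steps (i)--(ii) coincide with the paper's argument: the factor attached to a crossing is $\mu_{\ast,1}=t\,\mu$ with mean $q$, the factor attached to a nesting is $t\mu^2$ or exactly $t$ depending on the relative order of the two block-indices, and independence across block-pairs holds because distinct pairs of blocks give distinct unordered index pairs. The gap is in step (iii). You count as ``collisions'' all pairs of assignments sharing at least one index, getting $O(N^{2n-1})$ bad terms and hence a variance of order $N^{-1}$, which is not summable, forcing you into the subsequence-plus-interpolation route. The paper's proof instead observes that if the two index sets share \emph{at most one} element, then no $\mu(p,q)$ variable is common to the two products (a common variable requires two shared indices, since each $\mu$ is labelled by an unordered pair), so the expectation still factors and such terms cancel exactly against $\E(X_M)^2$. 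Only pairs sharing at least two indices survive, and these number $\Theta(M^{2n-2})$, giving $\mathrm{Var}(X_M)=O(M^{-2})$; this is summable, so Markov plus a union bound over $M\geq N$ yields almost-sure convergence directly, with no subsequence argument at all.

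As written, your interpolation step does not close. With $N_k=k^2$ the block $[N_k,N_{k+1})$ contains $O(k)$ values of $M$, and the natural estimate $\E[(X_M-X_{N_k})^2]=O\bigl((M-N_k)^2N_k^{-2}\bigr)$ combined with Chebyshev and a union bound over the block gives block probabilities of order $k^{-1}$, which again fail to be summable; the summands are unbounded and signed, so there is no monotonicity to exploit as in the classical SLLN interpolation, and you give no maximal inequality to replace it. The step can be repaired either by choosing $N_k=\lfloor k^{\alpha}\rfloor$ with $1<\alpha<2$ and doing the block estimate carefully, or, more simply, by the sharper collision count above, which removes the need for subsequences entirely. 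Two smaller remarks: your implicit strengthening of integrability (fourth moments of $\mu$ for the collision terms) is also implicit in the paper's Cauchy--Schwarz bound, so flagging it is fair; but your closing claim that the same variance estimate ``secures a.s.\ existence of the limit for arbitrary $\epsilon(1),\ldots,\epsilon(2n)$'' is not justified as stated, since general $\epsilon$-patterns introduce reciprocals $1/\mu$ whose moments are not controlled by the stated hypotheses.
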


\begin{proof} Fix $\mathscr V=\{(w_1,z_1),\ldots,(w_n,z_n)\}$ and, recalling that $i(w_m)=i(z_m)$ for all $m\in [n]$, consider the (classical) random variable $X_N$ given by
\begin{eqnarray}X_N&:=&N^{-n}\sum_{\substack{i(1),\ldots,i(2n)\in[N]\text{ s.t. }\\(i(1),\ldots,i(2n))\sim\mathscr V}}\left(\prod_{(w_j,w_k,z_j,z_k)\in \text{Cross}(\mathscr V)} \mu_{\epsilon(z_j),\epsilon(w_k)}(i(z_j),i(w_k))\right.\nonumber\\&&\left.\times\prod_{(w_\ell,w_m,z_m,z_\ell)\in \text{Nest}(\mathscr V)} \mu_{\epsilon(z_\ell),\epsilon(z_m)}(i(z_\ell),i(z_m))\mu_{\epsilon(z_\ell),\epsilon(w_m)}(i(z_\ell),i(w_m))\right),\label{eq-X}\end{eqnarray}
where the sequence of random variables $\{\mu_{\epsilon,\epsilon'}(i,j)\}_{\epsilon,\epsilon'\in\{1,\ast\},i,j\in\mathbb N,i\neq j}$ is obtained by letting $\mu_{\ast,\ast}(i,j)=\mu(i,j)$ and fixing the remaining coefficients as prescribed by (\ref{prescription1})-(\ref{prescription2}). 
The first goal is to compute $\E(X_N)$. By the independence assumption, since the overall product includes no repeated terms, the expectation factors over the products. It therefore suffices to evaluate $\E(\mu_{\epsilon(z_j),\epsilon(w_k)}(i(w_j),i(w_k)))$ for each crossing $(w_j,w_k,z_j,z_k)$ and $\E( \mu_{\epsilon(z_\ell),\epsilon(z_m)}(i(w_\ell),i(w_m))\mu_{\epsilon(z_\ell),\epsilon(w_m)}(i(w_\ell),i(w_m))$ for each nesting  $(w_\ell,w_m,z_m,z_\ell)$ of a given pair-partition. At the outset, recall that every pair-partition $\mathscr V$ contributing to $X_N$  is such that $(\epsilon(w),\epsilon(z))=(1,\ast)$. Then, starting with the crossings and assuming that $i(w_j)=i(z_j)<i(w_k)=i(z_k)$, the corresponding commutation coefficient in \eqref{eq-X} and its expectation are given as 
\begin{equation}\mu_{\ast,1}(i(z_j),i(w_k))=t\,\mu_{\ast,\ast}(i(z_j),i(w_k))=t\,\mu(i(z_j),i(w_k))\,\,\stackrel{\mathbb E}{\longmapsto}\,\, t(q/t)=q.\end{equation}
whose expectation is $t(q/t)=q$.

\noindent When it is instead the case that $i(w_j)=i(z_j)>i(w_k)=i(z_k)$, it suffices to notice that by (A)--(B),  $\mu_{\ast,1}(i,j)=\mu_{\ast,1}(j,i)$. The same conclusion then holds and each crossing therefore contributes a factor of $q$ on average.
Moving on to nestings, let $(w_\ell,w_m,z_m,z_\ell)$ be a nesting. 
If $i(w_\ell)=z_\ell<i(w_m)=i(z_m)$, the corresponding commutation coefficient in \eqref{eq-X} and its expectation are given as
\begin{equation}\begin{array}{l}\mu_{\ast,\ast}(i(z_\ell),i(z_m))\mu_{\ast,1}(i(z_\ell),i(w_m))=\mu_{\ast,\ast}(i(z_\ell),i(z_m))\,t\,\mu_{\ast,\ast}(i(z_\ell),i(w_m))\\=t(\mu(i(w_\ell),i(w_m)))^2\end{array} \,\,\stackrel{\mathbb E}{\longmapsto}\,\, t.\end{equation}
If on the other hand $i(w_\ell)=z_\ell>i(w_m)=i(z_m)$, by (A)--(B) the commutation coefficient and its expectation become
\begin{equation}\begin{array}{l}\mu_{\ast,\ast}(i(z_\ell),i(z_m))\mu_{\ast,1}(i(z_\ell),i(w_m))=(\mu_{\ast,\ast}(i(z_m),i(z_\ell)))^{-1}\mu_{\ast,1}(i(z_\ell),i(w_m))\\
=(\mu(i(z_m),i(z_\ell)))^{-1}\,t\,\mu(i(z_m),i(z_\ell))=t\end{array} \,\,\stackrel{\mathbb E}{\longmapsto}\,\, t.\end{equation}
Thus, each nesting also contributes a factor of $t$. It follows that $\E(X_N)$ is given by
\begin{equation}\E(X_N)=
N^{-n}\,\sum_{\substack{i(1),\ldots,i(2n)\in[N]\text{ s.t. }\\(i(1),\ldots,i(2n))\sim\mathscr V}}q^{\cross(\mathscr V)}\,t^{\nest(\mathscr V)}= \,q^{\cross(\mathscr V)}\,t^{\nest(\mathscr V)}N^{-n}{N\choose n}\,n!.
\end{equation}
Thus, 
\begin{equation}\lim_{N\to\infty}\E(X_N)=q^{\cross(\mathscr V)}\,t^{\nest(\mathscr V)}.\end{equation}

It now remains to show that $\lim_{N\to\infty} X_N=\lim_{N\to\infty} \E(X_N)$ a.s., that is, that for every $\eta>0$,
$\mathbb P\left(\bigcap_{N\geq 1}\bigcup_{M\geq N}\{|X_M-\E(X_M)|\geq\eta\}\right)=0.$
The calculation is analogous to that in \cite{Speicher1992}. By the subadditivity of $\mathbb P$ and a standard application of Markov inequality,
\begin{equation}\mathbb P\left(\bigcup_{M\geq N}\{|X_M-\E(X_M)|\geq\eta\}\right)\leq \sum_{M\geq N}\mathbb P\left(|X_M-\E(X_M)|\geq\eta\right)\leq \frac{1}{\eta^2}\sum_{M\geq N}\E(|X_M-\E(X_M)|^2).\label{eqChebyshev}\end{equation}
In turn,
$\E(|X_M-\E(X_M)|^2)=\E(X_M^2)-\E(X_M)^2$, with $\E(X_M)^2 = q^{2\,\cross(\mathscr V)}\,t^{2\,\nest(\mathscr V)}$ and 
{\small \begin{eqnarray}
\E(X_M^2)&=&M^{-2n}\sum_{\substack{i(1),\ldots,i(2n)\in[N]\text{ s.t. }\\(i(1),\ldots,i(2n))\sim\mathscr V,\\j(1),\ldots,j(2n)\in[N]\text{ s.t. }\\(j(1),\ldots,j(2n))\sim\mathscr V}}\E\left[\prod_{(w_k,w_\ell)\in \text{Cross}(\mathscr V)} \Big(\mu_{\ast,1}(i(w_k),i(w_\ell))\Big)\,\,\,\, \Big(\mu_{\ast,1}(j(w_k),j(w_\ell))\Big)\right.\nonumber\\
&&\left.\hspace{-1.4cm}\prod_{(w_m,w_m)\in \text{Nest}(\mathscr V)}\Big(\mu_{\ast,\ast}(i(w_m),i(w_n))\,\mu_{\ast,1}(i(w_m),i(w_n))\Big)\Big(\mu_{\ast,\ast}(j(w_m),j(w_n))\,\mu_{\ast,1}(j(w_m),j(w_n))\Big)\right]\,,\label{eq-X2}
\end{eqnarray}}
\noindent where, for convenience of notation, each crossing $(w_k,w_\ell,z_k,z_\ell)$ was abbreviated as $(w_k,w_\ell)$, and similarly for the nestings. 
Now suppose that for two choices of indices and the corresponding sets (not multisets) $\{i(1),\ldots,i(2n)\}$ and $\{j(1),\ldots,j(2n)\}$, there is at most one index in common, i.e. suppose that $\{i(1),\ldots,i(2n)\}\cap\{j(1),\ldots,j(2n)\}\leq 1$. In that case, $(i(k),i(k'))\neq (j(m),j(m'))$ for all  $k,k',m,m'\in [2n]$ with $k\neq k',m\neq m'$. By the independence assumption, the above expectation factors over the product (up to the parenthesized terms) and the contribution of each such $\{i(1),\ldots,i(2n)\},\{j(1),\ldots,j(2n)\}$ is simply $q^{2\cross(\mathscr V)}\,t^{2\nest(\mathscr V)}$. Thus, the choices of indices with $\{i(1),\ldots,i(2n)\}\cap\{j(1),\ldots,j(2n)\}\leq 1$ do not contribute to the variance $\E(|X_M-\E(X_M)|^2)$. It now remains to consider the $\Theta(M^{2n-2})$ remaining terms of the sum \eqref{eq-X2}. 

By the Cauchy-Schwarz inequality, the expectation of the product is bounded, and thus 
$$\E(|X_M-\E(X_M)|^2)\leq M^{-2n}M^{2n-2}C=\frac{C}{M^2},$$
where $C$ does not depend on $M$. Since $\sum_{M\geq 0}M^{-2}$ converges,
$$\lim_{N\to\infty}\sum_{M\geq N}\E(|X_M-\E(X_M)|^2)\to 0,$$
and therefore by (\ref{eqChebyshev}),
$$\mathbb P\left(\bigcap_{N\geq 1}\bigcup_{M\geq N}\{|X_M-\E(X_M)|\geq\eta\}\right)=\lim_{N\to\infty}\mathbb P\left(\bigcup_{M\geq N}\{|X_M-\E(X_M)|\geq\eta\}\right)= 0.$$
This completes the proof.
\end{proof}

\section{Random Matrix Models}
\label{sec-CLT3}
Considering some prescribed sequence $\{\mu_{\epsilon,\epsilon'}(i,j)\}_{\epsilon,\epsilon'\in\{1,\ast\},i,j\in\mathbb N,i\neq j}$ of real-valued commmutation coefficients satisfying (A)--(B), Lemma~\ref{lem-JW-Bia} exhibits a set of elements of a matrix algebra that satisfy the corresponding commutativity structure. The construction is analogous to the one given in Lemma~\ref{lem-JW-Bia} and the latter is in fact stated in a form that renders the present generalization natural.

\begin{proof}[Proof of Lemma~\ref{thm-JW-2}]
To show that $b_j^{\epsilon'}\,=\,\mu_{\epsilon',\epsilon}(j,i)\,b_j^{\epsilon'}b_i^{\epsilon}$, it suffices to consider the definitions in \eqref{eq-JW-B} and commute $2\times 2$ matrices. Specifically, let $i<j$ and, by elementary manipulations on tensor products, note that
\begin{eqnarray}
b_ib_j\!\!&=&\!\!\begin{array}{cccccccccccccccc}(\sigma_{\mu(1,i)}&\otimes&\ldots&\otimes&\sigma_{\mu(i-1,i)}&\otimes& \gamma&\otimes&\sigma_{1}&\otimes&\ldots&\otimes&\sigma_{1}&\otimes&\sigma_1^{\otimes (n-j)}\,)&\times\\
(\sigma_{\mu(1,j)}&\otimes&\ldots&\otimes&\sigma_{\mu(i-1,j)}&\otimes&\sigma_{\mu(i,j)}&\otimes&\sigma_{\mu(i+1,j)}&\otimes&\ldots&\otimes&\gamma&\otimes&\sigma_1^{\otimes (n-j)}\,)&\end{array}\nonumber\\
&=&(\sigma_{\mu(1,i)}\sigma_{\mu(1,j)})\otimes\ldots\otimes(\gamma\sigma_{\mu(i,j)})\otimes (\sigma_1\sigma_{\mu(i+1,j)})\otimes\ldots\otimes (\sigma_{1}\gamma)\otimes(\sigma_{1}\sigma_{1})^{\otimes(n-j)}\label{eq-product}
\end{eqnarray}

\noindent Now note that $\sigma_x\sigma_y=\sigma_y\sigma_x$ for all $x,y\in\R$. Moreover, $\gamma\sigma_x=\sqrt{t}x\,\sigma_x\gamma$. Thus,
$$\gamma \sigma_{\mu(i,j)}=\sqrt{t}\mu(i,j)\sigma_{\mu(i,j)}\gamma\quad\quad\text{and}\quad\quad \sigma_{1}\gamma=(\sqrt t)^{-1}\gamma \sigma_{1},$$
and, therefore,
$$b_ib_j=\frac{\sqrt{t}\mu(i,j)}{\sqrt{t}}b_jb_i=\mu_{\ast,\ast}(i,j)b_jb_i=\mu_{1,1}(j,i)b_jb_i.$$
Next, in commuting $b_i^\ast$ with $b_j$, the only non-trivial commutations are that of $\gamma^\ast$ with $\sigma_{\mu(i,j)}$ and $\sigma_{1}^\ast=\sigma_{1}$ with $\gamma$. Since $\gamma^\ast \sigma_x=(\sqrt{t}x)^{-1}\,\sigma_x\gamma^\ast$, it follows that
$$b_i^\ast b_j=\frac{1}{\sqrt t}\,\frac{1}{\sqrt t \mu(i,j)}b_jb_i^\ast=\frac{1}{t\,\mu(i,j)}b_jb_i^\ast=\frac{1}{t\,\mu_{\ast,\ast}(i,j)}b_jb_i^\ast=\frac{1}{\mu_{\ast,1}(i,j)}b_jb_i^\ast=\mu_{1,\ast}(j,i)b_jb_i^\ast.$$
The remaining relations now follow by taking adjoints, and the result is that $b_i^{\epsilon}b_j^{\epsilon'}\,\,=\,\,\mu_{\epsilon',\epsilon}(j,i)\,b_j^{\epsilon'}b_i^{\epsilon}$.

\vspace{10pt}
It remains to show that, in addition to the commutation relation, the resulting matrix sequences also satisfy the assumptions (1)-(4) of Theorem~\ref{CLT1}. Start by noting that for $a_1,\ldots,a_k\in\mathscr M_2$, $\varphi(a_1\otimes\ldots\otimes a_k)=(a_1)_{11}\ldots (a_k)_{11}$, where $(a)_{11}:=\langle e_1 a,e_1\rangle_2$. It therefore immediately follows that for all $i\in\mathbb N$, $\varphi(b_i)=\varphi(b_i^\ast)=0$. By the same token, it is also clear that for all $i,j\in\mathbb N$, $\varphi(b_ib_i^\ast) = \varphi(b_jb_j^\ast)=1$ and $\varphi(b_i^\epsilon b_i^{\epsilon'}) = \varphi(b_i^\epsilon b_i^{\epsilon'})=0$ for $\epsilon,\epsilon'\in\{1,\ast\}$ with $(\epsilon,\epsilon')\neq (1,\ast)$, and, furthermore, $|\varphi(\prod_{i=1}^nb_{j(i)}^{\epsilon(i)})|\leq 1$ for all $n$ and all choices of exponents and indices. The factoring over naturally ordered products also follows immediately, completing the proof.
\end{proof}

Finally, combining Theorem~\ref{CLT1} with Lemma~\ref{thm-JW-2}, and comparing the resulting moments with those given in Section~\ref{qt_Fock}, immediately yields the desired asymptotic models for the creation, annihilation, and field operators on the $(q,t)$-Fock space. For instance, the mixed moments of $S_N$ converge to those of the annihilation operator $a(e_1)$, where $e_1$ is an element of the orthonormal basis of $\H$. More generally, the expressions of Section~\ref{qt_Fock} in fact consider \emph{systems} of operators, e.g. they specify the \emph{joint mixed moments} of annihilation operators $a(e_1),\ldots,a(e_n)$ associated with basis elements $e_1,\ldots,e_n$. In order to asymptotically realize the joint moments of $a(e_1),\ldots,a(e_n)$ rather than the moments of $a(e_1)$ alone, it suffices to consider a sequence $S_{N,1},\ldots, S_{N,n}$ of partial sums built from non-intersecting subsets of $\{b_i\}_{i\in\mathbb N}$. For instance, the fact that $e_i$ and $e_j$ are orthogonal for $i\neq j$ and that the moment $\varphi_{q,t}(a(e_i)a(e_j))$ vanishes follows (in this asymptotic setting) from the fact that $\varphi(b_ib_j)=0$ for $i\neq j$. The general formulation is found in Corollary~1.

\vspace{10pt}

{\small {\bf Acknowledgements} 
\quad The author would like to thank her thesis advisors: Todd Kemp for the conversation that gave rise to the idea of revisiting the non-commutative Central Limit Theorem and his encouragement to pursue that idea further, and Philippe Biane for his advice and for his hospitality at the Institut Gaspard Monge during the final year of the author's Ph.D. The author also gratefully acknowledges the encouragement and advice received from Roland Speicher.
}

\bibliographystyle{abbrv}
\bibliography{qtFock}

\end{document}